\documentclass[reqno]{amsart}
\usepackage[T1]{fontenc}
\usepackage{dsfont}
\usepackage{mathrsfs}
\usepackage[colorlinks]{hyperref}
\usepackage{xcolor}
\usepackage[a4paper,asymmetric]{geometry}
\usepackage{mathscinet}
\usepackage{latexsym}
\usepackage{amsthm}
\usepackage{amssymb}
\usepackage{amsfonts}
\usepackage{amsmath}
\usepackage{longtable}
\usepackage{graphicx}
\usepackage{multirow}
\usepackage{multicol}

\setcounter{MaxMatrixCols}{10}

\newtheorem{theorem}{Theorem}[section]
\newtheorem{thm}[theorem]{Theorem}
\newtheorem{lem}[theorem]{Lemma}

\newtheorem{prop}[theorem]{Proposition}
\newtheorem{hypo}[theorem]{Hypothesis}
\newtheorem{ex}[theorem]{Example}
\newtheorem{rem}[theorem]{Remark}
\newtheorem{de}[theorem]{Definition}
\theoremstyle{remark}
\numberwithin{equation}{section}

\newcommand{\R}{\mathbb{R}}
\newcommand{\N}{\mathbb{N}}
\newcommand{\Z}{\mathbb{Z}}
\newcommand{\E}{\mathbb{E}}

\newcommand{\abs}[1]{\left\vert#1\right\vert}
\newcommand{\set}[1]{\left\{#1\right\}}
\newcommand{\norm}[1]{\left\Vert#1\right\Vert}
\allowdisplaybreaks

\begin{document}
\title[ Second Moment Estimator for AR(1) Model]{Second moment estimator for an AR(1) model driven by a long memory Gaussian noise
}
\author[Y. Chen]{Yong CHEN}
 \address{School of Mathematics and Statistics, Jiangxi Normal University, Nanchang, 330022,  Jiangxi,  P. R. China}
\email{zhishi@pku.org.cn}
 \author[L. Tian]{Li TIAN}
 \address{School of Mathematics and Statistics, Jiangxi Normal University, Nanchang, 330022, Jiangxi, P. R. China}
\email{tianli@jxnu.edu.cn}
  \author[Y. LI]{Ying LI}   \address{School of Mathematics and Computational Science, Xiangtan University, Xiangtan, 411105, Hunan, P. R. China (Corresponding author.)}
  \email{liying@xtu.edu.cn}

\begin{abstract}
In this paper, we consider an inference problem for the first order autoregressive process driven by a long memory stationary Gaussian process. Suppose that the covariance function of the noise can be expressed as $\abs{k}^{2H-2}$ times a function slowly varying at infinity.
The fractional Gaussian noise and the fractional ARIMA model and some others Gaussian noise are special examples that satisfy this assumption.
We propose a second moment estimator and prove the strong consistency, the asymptotic normality and the almost sure central limit theorem. Moreover,  we give the upper Berry-Ess\'een bound by means of Fourth moment theorem.\\
{\bf Keywords:} Gaussian process;  asymptotic normality; almost sure central limit theorem; Berry-Ess\'een bound; Breuer-Major theorem; Fourth moment theorem.\\
{\bf MSC 2010:} 60G10; 60F25; 62M10
\end{abstract}

\maketitle


\section{ Introduction}
For the first order autoregressive model $ (X_t, t\in \N) $ driven by a given noise sequence $\xi= (\xi_t, t\in \Z)$:
\begin{align}\label{model}
     X_{t} = \theta X_{t-1}  + \xi_{t}, \quad t\in \N
\end{align}
with  $X_0=0$, the inference problem regarding
the parameter $\theta$  has been extensively studied in probability and statistics literatures.
For when $\xi$ is independent identical distribution or a martingale difference sequence, this problem has been widely studied over the past decades
(see \cite{Anderson 79,Lai 83} and the references therein).
In the heavy-tailed noise case, the least square estimator (LSE) of AR(p) models was studied in \cite{Zhang}.
The maximum likelihood estimator (MLE) of AR(p) was investigated in \cite{Brouste} for regular stationary Gaussian noise, in which they transform the observation model into an ``equivalent''  model with Gaussian white noise. In \cite{Brouste},  it is pointed out that for the strongly dependent noises the LSE is generally not consistent.  Very recently,  in case of long-memory noise, the detection of a change of the above parameter $\theta$ is studied by means of the likelihood ratio test  \cite{Brouste 20}.

In this paper, we will discuss the long-range dependence Gaussian noise case and propose a second moment estimator.  First, we find that it is very convenient to construct a second moment estimator when we restrict the domain of the parameter $\theta$  in $$\Theta = \{  \theta \in \R\mid 0<\theta<1   \}.$$   It  seems that this restriction is very reasonable for real-world context sometimes. In fact, $\abs{\theta}<1$ is an assumption to ensure the model (\ref{model}) to have a stationary solution. We rule out the case of $-1<\theta<0$ in which  the series tends to oscillate rapidly. We also rule out the case of $\theta= 0$ in which $X_t$ is not an autoregressive model  any more.

Next,  we assume that the  stationary Gaussian nosie $\xi$ satisfies the following Hypothesis \ref{assume}:

\begin{hypo}\label{assume}
        The covariance function $ \rho(k)= \E(\xi_0 \xi_k) $ for any $k\in \Z$ satisfies
\begin{equation}\label{cov}
   \rho(k) = L(k)  \vert k \vert ^{2H-2} ,   \quad  H \in \left (\frac 12 , 1  \right)
\end{equation}   with $ L: (0, \infty)\to (0, \infty)$ is slowly varying at infinity in Zygmund's sense and $\tilde{L}(\lambda):=L(\frac{1}{\lambda}) $ is of bounded variation on $(a,\,\pi)$ for any $a> 0$.
Moreover, $ \rho (0)=1 $.
 \end{hypo}
It is well known that Eq. (\ref{cov}) is equivalent to the spectral density of $\xi$ satisfying
    \begin{equation*}
 h_{\xi}(\lambda) \sim C_H L(\lambda ^{-1}) \vert \lambda \vert ^{1-2H} , \quad \text{as} \; \lambda \to 0
    \end{equation*} with $C_H = \pi^{-1} \Gamma(2H-1) \sin ( \pi -\pi H )$. Please refer to \cite{Beran} or Lemma \ref{sp-cov} below.

We will see that the fractional Gaussian noise, the fractional ARIMA model driven by Gaussian white noise
and some other long memory Gaussian processes are special examples satisfying Hypothesis~\ref{assume}.

When $\vert  \theta  \vert < 1 $, the stationary solution to the model (\ref{model}) is $$Y_t=\sum_{j=0}^{\infty} \theta^j  \xi_{t-j},$$
and the solution with initial value $X_0=0$ can be represented as:
     \begin{equation}\label{solution}
 X_t = Y_t + \theta^t\zeta  ,
     \end{equation} where $\zeta$ is a normal random variable with zero mean.
It is clear that the second moment of $Y_t$ is:
\begin{align*}
      f(\theta) :=  \E (Y_t^2 )=  \sum_{i,j=0}^\infty   \theta^{i+j} \rho(i-j) .
\end{align*}
If $0<\theta<1$ then $f(\theta)$ is positive and strictly increasing.  Denote $f^{-1}(\cdot)$ is the inverse function of $f(\cdot)$.
We propose a second moment estimator of $\theta$ as:
           \begin{equation}\label{estimator}
    \tilde{\theta} _n   = f^{-1}\left( \frac{1}{n}\sum_{t=1}^{n}X_t^2\right).
           \end{equation}

  In this paper, we will show the strong consistency and give the asymptotic distribution for that estimator.
Moreover, we also give the Berry-Ess\'een bound when the limit distribution is Gaussian. These results are stated in the following theorems:
\begin{thm}\label{consist}
Under Hypothesis \ref{assume},
the estimator $\tilde{\theta}_n$ is strongly consistent, i.e.,
\begin{equation*}
  \lim_{n\to \infty} \tilde{\theta}_n   =\theta \quad \text{a.s.} .
\end{equation*}
\end{thm}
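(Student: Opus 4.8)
The plan is to reduce the statement to a strong law of large numbers for the empirical second moment $\frac1n\sum_{t=1}^n X_t^2$ and then exploit the continuity of $f^{-1}$. Since $f$ is continuous and strictly increasing on $(0,1)$, it is a homeomorphism onto its image, so $f^{-1}$ is continuous there. Consequently, once I establish
\begin{equation*}
  \frac1n\sum_{t=1}^n X_t^2 \longrightarrow f(\theta)\qquad\text{a.s.},
\end{equation*}
the definition \eqref{estimator} together with the continuity of $f^{-1}$ yields $\tilde\theta_n\to f^{-1}(f(\theta))=\theta$ almost surely.

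First I would split the empirical moment using the representation \eqref{solution}, $X_t=Y_t+\theta^t\zeta$, into
\begin{equation*}
  \frac1n\sum_{t=1}^n X_t^2=\frac1n\sum_{t=1}^n Y_t^2+\frac{2\zeta}{n}\sum_{t=1}^n\theta^t Y_t+\frac{\zeta^2}{n}\sum_{t=1}^n\theta^{2t}.
\end{equation*}
The last two terms are asymptotically negligible. Because $0<\theta<1$, the geometric sum $\sum_{t=1}^n\theta^{2t}$ stays bounded, so the third term is $O(1/n)$ and vanishes a.s. For the middle term, stationarity and the Cauchy--Schwarz inequality give $\E\sum_{t=1}^\infty\theta^t\abs{Y_t}\le\sqrt{f(\theta)}\,\sum_{t=1}^\infty\theta^t<\infty$; hence $\sum_{t=1}^\infty\theta^t Y_t$ converges almost surely, and dividing its partial sums by $n$ (with $\zeta$ a.s. finite) drives the middle term to $0$ as well.

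The crux is therefore the main term $\frac1n\sum_{t=1}^n Y_t^2\to\E(Y_t^2)=f(\theta)$ a.s. As $(Y_t)$ is a stationary Gaussian sequence, $(Y_t^2)$ is stationary with mean $f(\theta)$, and I would derive the almost sure convergence from Birkhoff's ergodic theorem. The decisive point is ergodicity: a stationary Gaussian process is ergodic exactly when its spectral measure is non-atomic, and under Hypothesis \ref{assume} the noise $\xi$ possesses a spectral density, which the causal linear filter $Y_t=\sum_{j\ge0}\theta^j\xi_{t-j}$ inherits; thus the spectral measure is absolutely continuous and the process is ergodic. This is precisely where the long-range dependence bites, and it is the main obstacle: since the covariance $\rho_Y(k)$ inherits the non-summable decay of order $\abs{k}^{2H-2}$, the elementary SLLN based on summable covariances is unavailable, so one genuinely needs the ergodic theorem (or an equivalent substitute).

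Should one wish to avoid the ergodic-theoretic route, the same step can be handled by a second-moment and subsequence argument: using $\mathrm{Cov}(Y_t^2,Y_s^2)=2\rho_Y(t-s)^2$ together with $\rho_Y(k)^2$ decaying like $\abs{k}^{4H-4}$, one checks that $\mathrm{Var}\bigl(\frac1n\sum_{t=1}^n Y_t^2\bigr)\to0$ for every $H\in(\tfrac12,1)$, which gives $L^2$-convergence; a.s.\ convergence then follows along a sufficiently fast polynomial subsequence $n_k=k^\alpha$ by Borel--Cantelli, after which a monotonicity-type control of the increments between consecutive indices fills the gaps. Either way, assembling the three pieces yields $\frac1n\sum_{t=1}^n X_t^2\to f(\theta)$ a.s., and continuity of $f^{-1}$ finishes the argument.
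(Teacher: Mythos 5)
Your proposal is correct and follows essentially the same route as the paper: decompose $\frac1n\sum_t X_t^2$ via $X_t=Y_t+\theta^t\zeta$, apply the ergodic theorem to the stationary Gaussian sequence $(Y_t^2)$ (the paper justifies ergodicity by $R(k)\to 0$, you by non-atomicity of the spectral measure---both standard and valid), show the remainder terms vanish, and conclude by continuity of $f^{-1}$. The only notable difference is that you handle the cross term by the elementary observation that $\sum_t\theta^t Y_t$ converges absolutely a.s.\ (since $\E\sum_t\theta^t\abs{Y_t}<\infty$), which for this theorem is simpler than the paper's Lemma~\ref{n-a}; the paper proves that stronger pathwise bound because it is reused later for the CLT and Berry--Ess\'een results.
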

\begin{thm}\label{normal}
Under Hypothesis \ref{assume} and suppose $H\in (\frac 12, \frac 34)$, we have the following asymptotic distribution of $\tilde{\theta}_n$ as $n\to \infty$:
\begin{equation}\label{1}
\sqrt{n}(\tilde{\theta}_n  -\theta)  \xrightarrow{law}
\mathcal{N} \left( 0 , \frac{\sigma_H^2}{[f'(\theta)]^2 } \right),
\end{equation} where $\sigma_H^2=2\sum_{k\in \Z} R^2(k)$ and $f'(\theta)$ is the derivative of $f(\theta)$.
\end{thm}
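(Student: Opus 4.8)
The plan is to derive the limit law for $\tilde{\theta}_n = f^{-1}(V_n)$, where $V_n := \frac1n\sum_{t=1}^n X_t^2$, from a central limit theorem for $V_n$ combined with the delta method. Since $f$ is strictly increasing and (one checks) continuously differentiable on $(0,1)$ with $f'(\theta)>0$, its inverse $f^{-1}$ is differentiable at $\mu:=f(\theta)$ with $(f^{-1})'(\mu)=1/f'(\theta)$. Hence, once I establish
\[
\sqrt n\,(V_n-\mu)\xrightarrow{law}\mathcal N(0,\sigma_H^2),
\]
the strong consistency from Theorem~\ref{consist} (which gives $V_n\to\mu$ a.s.) together with a first-order Taylor expansion of $f^{-1}$ about $\mu$ yields $\sqrt n(\tilde{\theta}_n-\theta)\xrightarrow{law}\mathcal N\bigl(0,\sigma_H^2/[f'(\theta)]^2\bigr)$, which is the assertion.

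Next I would reduce the CLT for $V_n$ to one for the stationary part alone. Using the representation \eqref{solution}, $X_t^2=Y_t^2+2\theta^t\zeta Y_t+\theta^{2t}\zeta^2$, so
\[
\sqrt n\,(V_n-\mu)=\frac1{\sqrt n}\sum_{t=1}^n\bigl(Y_t^2-\mu\bigr)+\frac{2\zeta}{\sqrt n}\sum_{t=1}^n\theta^t Y_t+\frac{\zeta^2}{\sqrt n}\sum_{t=1}^n\theta^{2t}.
\]
Because $0<\theta<1$, the geometric factor keeps the last sum bounded, so the third term is $O(n^{-1/2})\to0$; and since $\E[(\sum_{t=1}^n\theta^t Y_t)^2]\le\mu(1-\theta)^{-2}$ stays bounded in $n$ (as $\E Y_t^2\equiv\mu$), the middle term tends to $0$ in probability. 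Thus $V_n$ and $\frac1n\sum_{t=1}^n Y_t^2$ share the same Gaussian limit, and it suffices to analyze $\frac1{\sqrt n}\sum_{t=1}^n(Y_t^2-\mu)$.

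The heart of the argument is to view this as a quadratic functional of a long-memory Gaussian sequence. Writing $R(k):=\E(Y_0Y_k)$ (so $R(0)=\mu$), one has $Y_t^2-\mu=\mu\,H_2\bigl(Y_t/\sqrt\mu\bigr)$ with $H_2(x)=x^2-1$ the second Hermite polynomial; the sum therefore lives in the second Wiener chaos and has Hermite rank $2$. I would apply the Breuer--Major theorem (equivalently, the Fourth Moment Theorem for second-chaos sequences), which produces asymptotic normality with limiting variance
\[
\lim_{n\to\infty}\frac1n\sum_{s,t=1}^n 2R(t-s)^2=2\sum_{k\in\Z}R^2(k)=\sigma_H^2,
\]
provided $\sum_{k\in\Z}R^2(k)<\infty$. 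The decisive point is this summability: since $Y$ is the $\theta$-geometric moving average of $\xi$, convolving the geometric weights against $\rho(k)=L(k)\vert k\vert^{2H-2}$ gives $R(k)\sim(1-\theta)^{-2}\rho(k)$ as $\vert k\vert\to\infty$, whence $R^2(k)\sim C\,L(k)^2\vert k\vert^{4H-4}$; by Karamata's theorem this is summable exactly when $4H-4<-1$, i.e. $H<\tfrac34$, precisely the standing hypothesis.

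The main obstacle is twofold and the two parts are linked. First, one must rigorously establish the tail estimate $R(k)\sim(1-\theta)^{-2}\rho(k)$: this requires controlling the convolution of the summable geometric weights with a regularly varying, slowly-varying-modulated sequence, and invoking Karamata-type arguments for the factor $L$, which is exactly where Hypothesis~\ref{assume} and the spectral description enter. Second, one must verify the hypotheses of the Breuer--Major / Fourth Moment Theorem; for a second-chaos sequence the clean criterion is the vanishing of the contraction norms of the associated kernels, which reduces again to $\sum_k R^2(k)<\infty$. The remaining items---continuity and positivity of $f'$ on $(0,1)$, and the negligibility estimates for the initial-condition terms---are routine.
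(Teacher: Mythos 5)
Your proposal is correct and follows essentially the same route as the paper: the same decomposition of $\sum_t X_t^2$ into the stationary quadratic part plus two negligible initial-condition terms, the Breuer--Major theorem applied to the second-chaos sum with the summability $\sum_{k\in\Z}R^2(k)<\infty$ for $H<\tfrac34$, and the delta method. The only divergence is at the sub-step you flag as the ``main obstacle'': the paper obtains the tail asymptotics $R(k)\sim C_{\theta,H}L(k)\vert k\vert^{2H-2}$ not by a direct convolution estimate with Karamata-type control, but by passing to spectral densities, $h_Y(\lambda)=\vert 1-\theta e^{-i\lambda}\vert^{-2}h_\xi(\lambda)\sim(1-\theta)^{-2}h_\xi(\lambda)$ as $\lambda\to0$, and invoking the Tauberian correspondence of Theorem~\ref{sp-cov}, which is precisely where the bounded-variation assumption on $\tilde{L}$ in Hypothesis~\ref{assume} enters.
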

\begin{rem}
The case of $H\in [\frac34, 1)$ will be treated in a separate paper.
\end{rem}

\begin{thm}\label{ASCLT 01}
Let $Z$ be a standard Gaussian random variable.  Under Hypothesis \ref{assume} and suppose $H\in (\frac 12, \frac 34)$, then
$$G_n:= \frac{ f'(\theta) \sqrt n  (\tilde{\theta}_n  -\theta)}{ \sigma_H }$$
satisfies an almost sure central limit theorem (ASCLT). In other words, almost surely, for all $z\in \R$,
\begin{equation*}
\frac{1}{\log n}\sum_{k=1}^n \frac{1}{k}  \mathbf{1}_{\set{G_k\le z}}\to P(Z\le z) \quad \mathrm{ as } \quad n\to \infty,
\end{equation*}
or, equivalently, almost surely, for all continuous and bounded functions $\varphi:\,\R\to \R$,
\begin{equation*}
\frac{1}{\log n}\sum_{k=1}^n \frac{1}{k} \varphi(G_k)\to \E[\varphi (Z)]\quad \mathrm{ as } \quad n\to \infty.
\end{equation*}
\end{thm}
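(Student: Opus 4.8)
The plan is to reduce the ASCLT for $G_n$ to an ASCLT for a sequence living in the second Wiener chaos, and then to invoke the chaotic ASCLT criterion of Ibragimov--Lifshits / Bercu--Nourdin--Taqqu type. Using the delta-method reduction behind Theorem~\ref{normal} together with the decomposition $X_t=Y_t+\theta^t\zeta$ from \eqref{solution}, I would write $G_n=(1+\epsilon_n)F_n+r_n$, where
\[
F_n=\frac{1}{\sigma_H\sqrt n}\sum_{t=1}^n\bigl(Y_t^2-\E[Y_t^2]\bigr)\in\mathcal H_2
\]
is the dominant second-chaos term, the scalar $\epsilon_n=f'(\theta)(f^{-1})'(\eta_n)-1\to 0$ a.s. comes from the mean value theorem applied to $f^{-1}$ (using strong consistency, Theorem~\ref{consist}), and $r_n$ gathers the remaining pieces $2\theta^t\zeta Y_t$ and $\theta^{2t}\zeta^2$ arising from $X_t^2-Y_t^2$, each carrying a summable geometric weight $\theta^t$ and hence $o(1)$ after division by $\sqrt n$. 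By Theorem~\ref{normal} and Slutsky one then has $F_n\xrightarrow{law}\mathcal N(0,1)$ and $\E[F_n^2]\to1$.

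Next I would recall the criterion: if $(F_n)$ lies in a fixed Wiener chaos with $\sup_n\E[F_n^2]<\infty$ and $F_n\xrightarrow{law}\mathcal N(0,1)$, then $(F_n)$ satisfies the ASCLT provided
\[
\sum_{n\ge2}\frac{1}{n(\log n)^2}\sum_{k=1}^n\frac{\abs{\E[F_kF_n]}}{k}<\infty .
\]
Thus everything reduces to an estimate of the covariances $\E[F_kF_l]$. By Isserlis' formula, writing $R(k)=\E[Y_0Y_k]$ one has $\mathrm{Cov}(Y_s^2,Y_t^2)=2R^2(s-t)$, so for $k\le l$,
\[
\E[F_kF_l]=\frac{2}{\sigma_H^2\sqrt{kl}}\sum_{s=1}^k\sum_{t=1}^l R^2(s-t)\le\frac{2k}{\sigma_H^2\sqrt{kl}}\sum_{m\in\Z}R^2(m)=\sqrt{\frac kl},
\]
where the crucial input is $\sum_{m\in\Z}R^2(m)=\sigma_H^2/2<\infty$; this is exactly where the restriction $H\in(\tfrac12,\tfrac34)$ enters, since then $R(m)$ inherits the power decay $\abs{m}^{2H-2}$ of $\rho$ and is square-summable. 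Inserting $\abs{\E[F_kF_n]}\le\sqrt{k/n}$ gives $\sum_{k=1}^n k^{-1}\sqrt{k/n}=n^{-1/2}\sum_{k=1}^n k^{-1/2}\le 2$, so the inner sum is bounded uniformly in $n$ and the series collapses to $\sum_n [n(\log n)^2]^{-1}<\infty$, verifying the criterion for $(F_n)$.

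Finally I would transfer the ASCLT from $(F_n)$ to $(G_n)$. Since the statement concerns the logarithmically weighted empirical measures $\frac1{\log n}\sum_{k\le n}\frac1k\delta_{G_k}$, it suffices to show $\frac1{\log n}\sum_{k=1}^n\frac1k\abs{\varphi(G_k)-\varphi(F_k)}\to0$ a.s. for every bounded Lipschitz $\varphi$, which should follow from the a.s. control of $\epsilon_k$ and $r_k$ obtained in the reduction. I expect this perturbation step to be the main obstacle: unlike the in-law statement of Theorem~\ref{normal}, the ASCLT is sensitive to the joint behaviour of the whole family $(G_k)_{k\le n}$, so one must show that stripping off the nonlinearity $f^{-1}$ (the factor $1+\epsilon_k$) and the initial-value correction $\theta^t\zeta$ does not spoil the almost sure logarithmic average. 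The covariance bound above is the clean analytic engine, but making the approximation uniform across all scales in the log-average is the delicate point.
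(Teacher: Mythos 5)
Your route is in outline the same as the paper's: reduce $G_n$ to the second--chaos functional $V_n=\frac{1}{\sqrt n}\sum_{t=1}^n(Y_t^2-\E Y_t^2)$, get an ASCLT for it from the Bercu--Nourdin--Taqqu theory, then transfer to $G_n$ by peeling off the initial-value remainder and the mean-value-theorem factor. Your covariance bound $\abs{\E[F_kF_l]}\le\sqrt{k/l}$ is correct and is essentially the computation that the paper outsources to Theorem~\ref{ASCLT BN} (Proposition~5.2 of \cite{BNT 2010}, applied with $q=2$ and $\beta=2-2H>\tfrac12$). However, the criterion you quote is not the BNT theorem: for a fixed-chaos sequence their Theorem~3.1 requires, in addition to a covariance condition of the type you verify, a summability condition on the contraction norms $\norm{g_n\otimes_1 g_n}$ (their condition $(A_1)$); convergence in law to $\mathcal N(0,1)$ only gives that these norms tend to zero, not a rate. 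In the present situation this is harmless -- the contraction norms are bounded because $\sup_n v_n^2<\infty$, and $\sum_n 1/(n(\log n)^2)<\infty$, so $(A_1)$ holds automatically -- but it has to be stated and checked, or one should simply cite Proposition~5.2 of \cite{BNT 2010} as the paper does, which packages both conditions under the hypothesis $R(k)\sim\abs{k}^{-\beta}L(k)$, $\beta>\tfrac12$.

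The genuine gap is the last step, which you flag as ``the main obstacle'' and leave open. It is in fact the standard stability of the ASCLT under almost sure perturbations, and it is exactly what the paper invokes via Theorems~3.1 and~3.2 of \cite{ces 15}: if $(T_n)$ satisfies the ASCLT, $\xi_n\to0$ a.s.\ and $a_n\to1$ a.s., then $a_nT_n+\xi_n$ satisfies the ASCLT. The proof is an elementary sandwich: for a.e.\ $\omega$ and any $\varepsilon>0$, all but finitely many indices $k$ satisfy $\abs{\xi_k}\le\varepsilon$ and $\abs{a_k-1}\le\varepsilon$, those finitely many contribute $O(1/\log n)$ to the logarithmic average, and the remaining indicators are squeezed between $\mathbf{1}_{\set{T_k\le z'}}$ for $z'$ close to $z$; one concludes by continuity of $\Phi$. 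No uniformity ``across all scales'' beyond almost sure convergence is needed. Note that your proposed Lipschitz estimate $\abs{\varphi(G_k)-\varphi(F_k)}\le L(\abs{\epsilon_k}\abs{F_k}+\abs{r_k})$ would additionally require $\epsilon_k\abs{F_k}\to0$ a.s., which does not follow from $\epsilon_k\to0$ alone since $(F_k)$ is only tight; the multiplicative form of the stability theorem avoids this issue by working with the events $\set{a_kT_k\le z}=\set{T_k\le z/a_k}$ directly. With the factor $a_n=f'(\theta)/f'(\eta_n)\to1$ a.s.\ (from the strong consistency \eqref{x2 moment limit}) and the additive remainder controlled a.s.\ by Lemma~\ref{n-a}, the argument closes exactly as in the paper.
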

\begin{thm}\label{B-E}
Let $Z$ be a standard Gaussian random variable. 
Under Hypothesis \ref{assume} and suppose $H\in (\frac 12, \frac 34)$, there exists a constant $C_{\theta, H}> 0$ such that when $n$ is large enough,
\begin{equation}\label{l34}
  \sup_{z\in \R} \left|  P\left\{ \frac{ f'(\theta) \sqrt n  (\tilde{\theta}_n  -\theta)}{ \sigma_H } \le z \right\}
-   P\{ Z\le z \}  \right|  \le C_{ \theta,H}  \varphi (n),
\end{equation} 
where $\sigma_H$ is given in Theorem~\ref{normal} and
\begin{equation*}
  \varphi (n) =\left\{
         \begin{array}{ll}
\frac{1}{n ^ { \frac12- }},   & \quad \textit{if} \,\,   H\in   \left( \frac 12, \frac 58 \right) ;\\
\frac{1}{n ^ {3-4H-}} ,    & \quad \textit{if} \,\,  H\in \left[ \frac 58, \frac 34 \right).
            \end{array}
   \right.
\end{equation*} Here $ \frac12-$ means that $ \frac12-\epsilon$ for any $\epsilon>0$.
\end{thm}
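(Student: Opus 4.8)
The plan is to merge the delta method with the Malliavin--Stein (fourth moment) machinery, reducing the Berry--Ess\'een bound for $\tilde\theta_n$ first to one for a quadratic functional in the second Wiener chaos and ultimately to the decay rate of a single fourth contraction. Writing $V_n=\frac1n\sum_{t=1}^n X_t^2$ and $\psi=f^{-1}$, a second order Taylor expansion of $\psi$ at $f(\theta)$ gives
\[
f'(\theta)\sqrt n\,(\tilde\theta_n-\theta)=\sqrt n\,\big(V_n-f(\theta)\big)+\tfrac12 f'(\theta)\,\psi''(c_n)\,\sqrt n\,\big(V_n-f(\theta)\big)^2,
\]
with $c_n$ between $V_n$ and $f(\theta)$. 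The discrepancy between $X_t^2$ and $Y_t^2$ produced by the transient term $\theta^t\zeta$ in \eqref{solution} contributes only $O(1/n)$ to $V_n$, because $\sum_t\theta^t$ and $\sum_t\theta^{2t}$ converge, and the quadratic remainder is $O(n^{-1/2})$ in probability. To turn these into uniform (Kolmogorov) errors I would apply the elementary smoothing inequality $\sup_z|P(F+R\le z)-P(Z\le z)|\le \sup_z|P(F\le z)-P(Z\le z)|+P(|R|>\epsilon)+\epsilon/\sqrt{2\pi}$: since $\sqrt n\,(V_n-f(\theta))$ is, up to negligible terms, a second chaos variable with uniformly bounded moments of every order (hypercontractivity), Markov's inequality at a high moment together with the choice $\epsilon=n^{-1/2+\delta}$ shows that these errors contribute at most $n^{-(1/2-)}$, which is dominated by $\varphi(n)$ for every $H\in(\tfrac12,\tfrac34)$.

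This reduces matters to the centred quadratic variation $F_n:=\frac{1}{\sigma_H\sqrt n}\sum_{t=1}^n\big(Y_t^2-f(\theta)\big)$. Representing the noise through an isonormal Gaussian process, $Y_t=I_1(e_t)$ with $\langle e_s,e_t\rangle_{\mathcal H}=R(s-t)$, the product formula yields $F_n=I_2(g_n)$ with $g_n=(\sigma_H\sqrt n)^{-1}\sum_{t=1}^n e_t^{\otimes 2}$, so $F_n$ lives in the second chaos. Since the summable filter $\sum_j\theta^j$ preserves the memory exponent, $R(k)\sim(1-\theta)^{-2}L(k)|k|^{2H-2}$, and $\sum_k R(k)^2<\infty$ exactly for $H<\tfrac34$, which is what makes $\sigma_H^2=2\sum_{k\in\Z}R(k)^2$ finite. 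I would then invoke the fourth moment theorem in the quantitative form
\[
d_{\mathrm{Kol}}(F_n,Z)\ \le\ C\Big(\big|\E[F_n^2]-1\big|+\sqrt{\kappa_4(F_n)}\,\Big),
\]
in which $\kappa_4(F_n)=c\,\|g_n\otimes_1 g_n\|_{\mathcal H^{\otimes 2}}^2$; the third cumulant does not enter this bound and is in any case of smaller order here.

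Both quantities on the right are explicit multi-index sums of the covariance $R$. The variance defect is
\[
\E[F_n^2]-1=-\sigma_H^{-2}\Big(2\!\!\sum_{|k|\ge n}\!\!R(k)^2+\tfrac2n\!\!\sum_{|k|<n}\!\!|k|\,R(k)^2\Big),
\]
while $\kappa_4(F_n)$ equals a constant times $n^{-2}\sum_{i,j,k,l=1}^n R(i-j)R(j-k)R(k-l)R(l-i)$. The core of the proof is the power counting for these sums via $R(k)^2\asymp|k|^{4H-4}$. The tail $\sum_{|k|\ge n}|k|^{4H-4}$ and the weighted sum both give $|\E[F_n^2]-1|\lesssim n^{4H-3}$. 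For the fourth cumulant I would fix one index by stationarity, sum it out for a factor $n$, and reduce to the triple sum $\sum_{a,b,c}|a|^{2H-2}|b|^{2H-2}|c|^{2H-2}|a+b+c|^{2H-2}$, whose continuous analogue scales like $\lambda^{8H-5}$ under dilation and hence converges precisely when $8H-5<0$. This is the origin of the threshold $H=\tfrac58$: one gets $\kappa_4(F_n)\lesssim n^{-1}$ for $H<\tfrac58$ and $\kappa_4(F_n)\lesssim n^{8H-6}$ for $H\ge\tfrac58$, so that $\sqrt{\kappa_4(F_n)}\lesssim n^{-1/2}$ in the lower range and $\lesssim n^{-(3-4H)}$ in the upper range. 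Since the variance defect $n^{4H-3}$ never exceeds $\sqrt{\kappa_4(F_n)}$, the fourth contraction governs the final rate.

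The step I expect to be the main obstacle is exactly this power counting, together with the bookkeeping of the slowly varying factor $L$ that multiplies every power of $|k|$. Carrying $L$ through the tail, weighted, and triple sums and applying Potter's bound $L(k)\le C_\epsilon k^{\epsilon}$ replaces each exponent $n^{\alpha}$ by $n^{\alpha+\epsilon}$ for arbitrary $\epsilon>0$; this is precisely the mechanism that degrades the clean exponents $\tfrac12$ and $3-4H$ into the $\tfrac12-$ and $3-4H-$ recorded in the statement. Assembling the chaos bound, the two rate estimates, and the negligible linearisation errors, and taking $n$ large, then yields \eqref{l34}.
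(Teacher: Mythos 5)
Your proposal is correct and shares the overall architecture of the paper's argument (reduce to a quadratic functional of the stationary Gaussian sequence, dispose of the transient $\theta^t\zeta$ term via the smoothing inequality $\sup_z|P(F+R\le z)-\Phi(z)|\le \sup_z|P(F\le z)-\Phi(z)|+P(|R|>\epsilon)+\epsilon/\sqrt{2\pi}$, then apply a quantitative fourth moment bound with Potter's bound $L(k)\le C_\epsilon k^\epsilon$ accounting for the epsilon losses), but two of your steps are executed by a genuinely different route. First, for the delta-method step you Taylor-expand $f^{-1}$ to second order and absorb the quadratic remainder $O_P(n^{-1/2})$ into the smoothing inequality, whereas the paper never linearises: it uses monotonicity of $f$ to rewrite the event as $\{\frac1n\sum_t X_t^2\le f(\theta+\frac{\sigma_H z}{f'(\theta)\sqrt n})\}$ and proves separately (its Lemma \ref{A3}) that $\sup_z|\Phi(u(z))-\Phi(z)|\le C/\sqrt n$ for the transformed quantile $u(z)$; your version needs $(f^{-1})''$ bounded on a neighbourhood of $f(\theta)$ and a high-probability localisation of $c_n$, both of which are available, while the paper's version instead needs the convexity of $f$ and a case analysis over the range of $z$ — the two are comparable in labour. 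Second, for the chaos estimate you bound the fourth cumulant directly through the circular contraction sum and the dilation-scaling argument giving the $8H-5$ exponent, whereas the paper invokes the packaged Nourdin--Peccati bound $d_{TV}(V_n/v_n,Z)\le \frac{4\sqrt2}{v_n^2\sqrt n}\bigl(\sum_{|k|<n}|R(k)|^{4/3}\bigr)^{3/2}$, whose $\ell^{4/3}$ summability threshold $\frac43(2H-2)<-1$ reproduces the same cutoff $H=\frac58$ and the same rates; your route is self-contained and makes the origin of the threshold transparent, the paper's is shorter. One point where your write-up is actually more careful than the paper: you explicitly include the variance-defect term $|\E F_n^2-1|\lesssim n^{4H-3}$ needed to pass from the exact normalisation $v_n$ to the limiting one $\sigma_H$, a step the paper glosses over when it applies its Proposition \ref{prop Fourth Moment B-E} (stated for $V_n/v_n$) to $V_n/\sigma_H$.
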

\begin{rem}
We do not know how to improve the upper bound $1/n ^ { \frac12- }$ to $1/\sqrt{n}  $ when $H\in   \left( \frac 12, \frac 58 \right)$. 
\end{rem}
Next, we give some sequences that satisfy Hypothesis \ref{assume}.

\begin{ex}
The fractional Gaussian noise with covariance function
\begin{equation*}
  \rho(k)=\frac{1}{2}\left(\vert k+1\vert^{2H}+\vert k-1\vert^{2H}-  2\vert k \vert^{2H}   \right), \quad k \in \Z
\end{equation*}
satisfies Hypothesis \ref{assume} when $ H > \frac 12 $. It is well-known that $ \rho(k) \sim  H(2H-1) | k |^{2H-2} $ as $k\to \infty$.
\end{ex}

\begin{ex}
The fractional ARIMA(0,$d$,0) model driven by a Gaussian white noisesatisfies Hypothesis \ref{assume} when $d\in (0,\,\frac12)$. It is well-known that  it's the spectral density satsifies
\begin{equation*}
  h_{\xi}(\lambda) \sim \frac 1{2\pi}\vert \lambda \vert^{-2d},  \quad \lambda \to 0
\end{equation*}

\end{ex}

\begin{ex}
$\xi_t$ is linear stationary sequence defined by
\begin{equation*}
    \xi_t  =  \sum_{k\le t}  b_{t-k} \varepsilon_t , \quad t\in \Z .
\end{equation*}
Here $\set{\varepsilon _t, t\in \Z}$ is a Gaussian white noise, and the  weights $\set{b_t, t\in \N_+}$ satisfy that $\sum b_t^2  < \infty$. Assume that the weights decay slowly hyperbolically:
\begin{equation*}
    b_k  =  L_0(k) \vert k \vert ^{H- \frac 32} ,
\end{equation*}
where $H\in (\frac 12 , 1)$, and $L_0( \cdot )$ is a slowly varying function. Then Hypothesis \ref{assume} is valid if we take $L(k) \propto L_0^2(k)$ \textup{(see \cite{Giraitis 96})}.
\end{ex}

     In the remainder of this paper, $C$ and $c$ will be a generic positive constant independent of $n$ the value of which may differ from line to line.
\section{ Preliminary}

In this section, we list the main definitions and theorems that is used to show our results.
The following two definitions are cited from Definition~1.1 and 1.2 of \cite{Beran} respectively.
\begin{de}
A positive function $L(x)$ defined for $x>x_0$ is called a slowly varying at infinity function
in Zygmund's sense if, for any $\delta > 0$,
$p_1(x) = x^{\delta}L(x)$ is an increasing, and $p_2(x) = x^{ - \delta}L(x)$ is a decreasing, function of $x$ for $x$ large enough.
Similarly, $L$ is called slowly varying at the origin if $\tilde{L}(x)  = L(x^{-1})$ is slowly varying at infinity.
\end{de}
It is known that if $L(x) $ is slowly varying at infinity then
\begin{align}\label{karamata}
\lim_{x\to\infty} \frac{L(ux)}{L(x)}=1
\end{align}
for every fixed $u>0$, and even uniformly in every interval $a\le u\le \frac{1}{a},\,0<a<1$ \cite[p.186]{Zygmund}.

\begin{de}\label{delong}
Let $\set{\xi_t}$ be a second-order stationary process with autocovariance function $\rho(k)(k\in \Z)$ and spectral density
\begin{equation*}
  h_{\xi} (\lambda) =  (2\pi)^{-1}  \sum_{k=-\infty}^{\infty} \rho(k) \exp(-ik\lambda) ,  \quad \lambda \in [-\pi, \pi].
\end{equation*}
Then $\set{\xi_t}$ is said to exhibit linear long-range dependence, if
\begin{equation*}
  h_{\xi}  (\lambda)  = L_h(\lambda)  \vert \lambda \vert ^{1-2H} ,
\end{equation*}
where $L_h(\lambda) > 0$ is a symmetric function that is slowly varying at zero and $H\in (\frac 12, 1)$.
\end{de}

The following theorem is well-known and is cited from Theorem 1.3 of \cite{Beran}:
\begin{thm}\label{sp-cov}
Let $ R(k)(k\in \Z )$ and $h(\lambda) (\lambda \in [\pi, \pi] )$ be the autocovariance function
and spectral density respectively of a second-order stationary process $\set{\xi_t}$. Then the following holds:
                      \begin{enumerate}
            \item If  \begin{equation*}
  R(k)=L_R(k) \vert k \vert ^{2H-2}, \quad k\in \Z,
\end{equation*}
where $L_R(k)$ is slowly varying at infinity in Zygmund's sense, and
 $H\in(\frac 12 , 1)$, then
\begin{equation*}
  h(\lambda) \sim L_h(\lambda)  \vert \lambda \vert ^{1-2H} , \quad \lambda \to 0,
\end{equation*} where
\begin{equation*}
   L_h(\lambda) = L_R(\lambda ^{-1}) \pi^{-1} \Gamma(2H-1) \sin ( \pi -\pi H ) .
\end{equation*}
            \item If \begin{equation*}
  h(\lambda)= L_h(\lambda) \vert \lambda \vert ^{1-2H} , \quad 0< \lambda < \pi,
\end{equation*}
where $H\in (\frac 12, 1)$, and $L_h (\lambda )$ is slowly varying at the origin
in Zygmund's sense and of bounded variation on $(a, \pi)$ for any $a >0$, then
\begin{equation*}
  R(k) \sim L_R(k) \vert k \vert ^{2H-2} , \quad k\to \infty,
\end{equation*}
where
\begin{equation*}
  L_R(k) = 2L_h (k^{-1})  \Gamma(2-2H)\sin \left(\pi H - \frac 12\pi \right) .
\end{equation*}
                          \end{enumerate}
 \end{thm}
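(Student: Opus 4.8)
The plan is to read this as a pair of Abelian--Tauberian statements linking the cosine series $h(\lambda)=(2\pi)^{-1}\sum_{k}R(k)e^{-ik\lambda}$ to its Fourier coefficients $R(k)=\int_{-\pi}^{\pi}h(\lambda)e^{ik\lambda}\,d\lambda=2\int_0^{\pi}h(\lambda)\cos(k\lambda)\,d\lambda$, the point being that in each part one side is regularly varying with index determined by $H$. The two analytic tools I would rely on are the uniform convergence property \eqref{karamata} of slowly varying functions, and the classical integral $\int_0^{\infty}u^{s-1}\cos u\,du=\Gamma(s)\cos(\tfrac{\pi s}{2})$, valid for $0<s<1$, which is what will manufacture the trigonometric constants.

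For part (1) (the Abelian direction) I would write $h(\lambda)\sim \pi^{-1}\sum_{k\ge 1}L_R(k)k^{2H-2}\cos(k\lambda)$ as $\lambda\downarrow 0$, the term $R(0)$ being negligible against the diverging sum, and compare this series with the integral $\int_1^{\infty}L_R(x)x^{2H-2}\cos(\lambda x)\,dx$ via summation by parts. The substitution $u=\lambda x$ turns this into $\lambda^{1-2H}\int_{\lambda}^{\infty}L_R(u/\lambda)u^{2H-2}\cos u\,du$; replacing $L_R(u/\lambda)$ by $L_R(1/\lambda)$ by means of \eqref{karamata}, after the usual split into a bounded core $u\le A$ where uniform convergence applies and a tail $u>A$ controlled by the decay $u^{2H-2}$, leaves the constant $\int_0^{\infty}u^{2H-2}\cos u\,du=\Gamma(2H-1)\cos(\pi(2H-1)/2)$. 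Since $\cos(\pi(2H-1)/2)=\sin(\pi-\pi H)$, this reproduces $L_h(\lambda)=L_R(\lambda^{-1})\pi^{-1}\Gamma(2H-1)\sin(\pi-\pi H)$. The condition $2H-2\in(-1,0)$ is exactly what makes the tail integrable and the series-to-integral error negligible.

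For part (2) (the Tauberian direction) I would start from $R(k)=2\int_0^{\pi}h(\lambda)\cos(k\lambda)\,d\lambda$ and rescale $u=k\lambda$ to get $R(k)=2k^{2H-2}\int_0^{k\pi}L_h(u/k)u^{1-2H}\cos u\,du$. Here $1-2H\in(-1,0)$ again concentrates the mass near the origin, so heuristically $R(k)\sim 2k^{2H-2}L_h(1/k)\int_0^{\infty}u^{1-2H}\cos u\,du$, and $\int_0^{\infty}u^{1-2H}\cos u\,du=\Gamma(2-2H)\cos(\pi(2-2H)/2)=\Gamma(2-2H)\sin(\pi H-\tfrac{\pi}{2})$ yields precisely $L_R(k)=2L_h(k^{-1})\Gamma(2-2H)\sin(\pi H-\tfrac12\pi)$. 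The delicate point is that, unlike in the Abelian case, a mere order-of-magnitude bound on $h$ does not suffice; this is where I would invoke the bounded-variation hypothesis on $\tilde L(\lambda)=L(1/\lambda)$, applying the second mean-value theorem (or integrating by parts against the regularly varying factor) to bound the oscillatory contribution of $\cos(k\lambda)$ on the region bounded away from $0$, which is the standard Tauberian side-condition that excludes pathological cancellation.

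I expect the Tauberian half to be the real obstacle: the range $\int_0^{k\pi}$ has to be broken into a core $u\le A$, a transition zone, and a far tail near $u=k\pi$, and on each piece one must balance the slow variation of $L_h$ against the oscillation of $\cos u$, using \eqref{karamata} uniformly together with the bounded-variation control, whereas the Abelian half is reachable by monotonicity and summation-by-parts estimates alone. Since the statement is classical---it is quoted from \cite{Beran} and is a special case of the Fourier Abelian--Tauberian theory for regularly varying sequences---I would ultimately present it by reducing to those known results, my only genuine task being to check that Hypothesis~\ref{assume} supplies the required slow-variation and bounded-variation conditions.
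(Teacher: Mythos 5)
The paper offers no proof of this statement: it is imported verbatim as Theorem~1.3 of \cite{Beran}, so there is nothing to compare your argument against line by line. Your sketch is the standard Abelian/Tauberian route behind that cited result --- the constants $\Gamma(2H-1)\sin(\pi-\pi H)$ and $\Gamma(2-2H)\sin\left(\pi H-\frac{1}{2}\pi\right)$ do come out of $\int_0^\infty u^{s-1}\cos u\,\mathrm{d}u=\Gamma(s)\cos(\pi s/2)$ exactly as you compute, and you correctly identify the bounded-variation hypothesis as the side condition controlling the oscillatory contribution away from the origin --- so deferring to the classical reference, as you propose in your last paragraph, is precisely what the authors do.
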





The following Breuer-Major theorem is well-known, for example, see \cite{BM} and \cite{Nourdin}:%
\begin{thm}[Breuer-Major theorem]\label{B-M}  
$ Y= (Y_t, t\in \Z) $ is a centered stationary Gaussian sequence. Set $R(k)= E(Y_0 Y_k) $. Define
\begin{equation}\label{vn def}
   V_n := \frac{1}{\sqrt n} \sum_{t=1}^n [ Y_t^2 -\E(Y_t^2 )], \quad n\ge 1.
   \end{equation}
 If $$\sigma_H^2:=2\sum_{k\in\Z} R^2(k)<\infty,$$ then
\begin{equation*}
 V_n= \frac{1}{\sqrt n} \sum_{t=1}^{n} [Y_t^2 -\E(Y_t^2 )]  \xrightarrow{law} \mathcal{N} (0,\sigma_H^2), \quad \textit{as} \, \,  n\to \infty,
\end{equation*} 
\end{thm}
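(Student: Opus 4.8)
The plan is to realize $V_n$ as a sequence in the second Wiener chaos and then apply the Fourth Moment Theorem of Nualart--Peccati. Let $\mathcal H$ be the real separable Hilbert space generated by $Y$, equipped with the inner product determined by $\langle e_s, e_t\rangle_{\mathcal H} = R(s-t)$, and let $I_q$ denote the $q$-th multiple Wiener--It\^o integral with respect to the isonormal Gaussian process over $\mathcal H$, so that $Y_t = I_1(e_t)$. Since $e_t\otimes e_t$ is symmetric, the multiplication formula gives $I_1(e_t)^2 = I_2(e_t\otimes e_t) + \norm{e_t}_{\mathcal H}^2$, and because $\E(Y_t^2)=R(0)=\norm{e_t}_{\mathcal H}^2$ we obtain $Y_t^2-\E(Y_t^2)=I_2(e_t\otimes e_t)$. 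Hence
\[
V_n = I_2(f_n), \qquad f_n := \frac{1}{\sqrt n}\sum_{t=1}^n e_t\otimes e_t,
\]
and the whole problem is reduced to a central limit theorem for a single second-chaos sequence.

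First I would check the convergence of the variance. Using stationarity and the Gaussian identity $\mathrm{Cov}(Y_s^2,Y_t^2)=2R(s-t)^2$, one finds
\[
\E[V_n^2] = \frac1n\sum_{s,t=1}^n 2R(s-t)^2 = 2\sum_{\abs k<n}\Big(1-\frac{\abs k}{n}\Big)R(k)^2,
\]
which tends to $2\sum_{k\in\Z}R(k)^2=\sigma_H^2$ by dominated convergence, the hypothesis $\sum_{k\in\Z}R(k)^2<\infty$ furnishing the integrable dominating sequence. Next, by the Fourth Moment Theorem it suffices to prove that the single nontrivial contraction tends to zero, i.e. $\norm{f_n\otimes_1 f_n}_{\mathcal H^{\otimes 2}}\to 0$, equivalently that the fourth cumulant of $V_n$ vanishes in the limit. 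Expanding the contraction using $(e_s\otimes e_s)\otimes_1(e_t\otimes e_t)=R(s-t)\,e_s\otimes e_t$ yields the cyclic four-fold sum
\[
\norm{f_n\otimes_1 f_n}^2 = \frac{1}{n^2}\sum_{s,t,s',t'=1}^n R(s-t)R(t-t')R(t'-s')R(s'-s).
\]

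This contraction estimate is the heart of the matter and the step I expect to be the main obstacle. Passing to difference variables $a=s-t$, $b=t-t'$, $c=t'-s'$ (so that $s'-s=-(a+b+c)$) and bounding the number of admissible translations by $n$, the quantity is controlled by $\frac{C}{n}\sum_{\abs a,\abs b,\abs c<n}\abs{R(a)R(b)R(c)R(a+b+c)}$. Under the power-law form $R(k)=L(k)\abs{k}^{2H-2}$ with $H<3/4$ I would estimate this directly: a Cauchy--Schwarz step in the variable $b$ produces the factor $\norm{R}_{\ell^2}^2=\sum_k R(k)^2<\infty$, leaving $\big(\sum_{\abs a<n}\abs{R(a)}\big)^2$, which by Karamata's theorem is of order $n^{2(2H-1)}$ up to a slowly varying factor. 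Altogether $\norm{f_n\otimes_1 f_n}^2\le C\,n^{4H-3}\,L(n)^2\to 0$ precisely because $H<3/4$. (In the fully general statement, where only $\sum_k R(k)^2<\infty$ is assumed, the same sum is shown to vanish by a truncation and dominated-convergence argument on the long-range part, which is the classical content of the Breuer--Major theorem.) Combining the variance convergence with the vanishing of the contraction and invoking the Fourth Moment Theorem yields $V_n\xrightarrow{law}\mathcal N(0,\sigma_H^2)$, which completes the proof.
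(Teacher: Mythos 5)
The paper offers no proof of this statement: it is quoted verbatim as a known result with citations to Breuer--Major and to Nourdin--Peccati, so there is nothing internal to compare against. Your argument is, in substance, the standard modern proof from the cited source (Theorem 7.2.4 of Nourdin--Peccati): write $Y_t^2-\E(Y_t^2)=I_2(e_t\otimes e_t)$, check $\E[V_n^2]\to\sigma_H^2$ by dominated convergence, and reduce the CLT to the vanishing of the single contraction $\norm{f_n\otimes_1 f_n}$ via the Fourth Moment Theorem. All of those steps are correct, and your Cauchy--Schwarz-in-$b$ estimate giving $\norm{f_n\otimes_1 f_n}^2\le C n^{4H-3}L(n)^2$ is exactly what is needed in the regime $H<3/4$ where the paper actually invokes the theorem. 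One caveat: the theorem as stated assumes only $\sum_k R(k)^2<\infty$, with no power-law structure, and under that hypothesis alone your Cauchy--Schwarz bound does not close (it only yields $\norm{f_n\otimes_1 f_n}^2\le C\norm{R}_{\ell^2}^4$, since $\sum_{\abs a<n}\abs{R(a)}$ can grow like $\sqrt n$). You flag this and defer to the truncation/dominated-convergence argument, which is the right fix, but be aware that this deferred step is precisely the nontrivial content of the general statement, so a fully self-contained proof of the theorem \emph{as stated} would need to spell it out: split $R$ into $R\mathbf 1_{\set{\abs k\le m}}$ plus a tail of small $\ell^2$ norm and let $m\to\infty$ after $n\to\infty$. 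A second, minor point: the Fourth Moment Theorem is usually stated for a nondegenerate limiting variance, so the (trivial) case $\sigma_H^2=0$, where $V_n\to0$ in $L^2$, deserves a one-line remark.
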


The following theorem  is taken form Theorem 7.3.1 of \cite{Nourdin}, which is a corollary of Fourth Moment Berry-Ess\'een bound.

\begin{thm}\label{rate} 
Set $Z \sim \mathcal{N} (0, 1)$ and let $R(k),\,V_n$ be given in Theorem~\ref{B-M}.   Set  $v_n^2 = \E(V_n^2) $. 
Then, for all $n\ge 1$,
\begin{equation*}
  d_{TV} (V_n /v_n, Z) \le \frac{4 \sqrt 2}{v_n^2 \sqrt n} \left( \sum_{k=-n+1}^{n-1} |R(k)|^{\frac 43} \right)^{\frac 32}.
\end{equation*}
\end{thm}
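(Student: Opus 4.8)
The plan is to recognize $V_n/v_n$ as a normalized element of the second Wiener chaos and to invoke the quantitative Fourth Moment (Nourdin--Peccati) total-variation bound, thereby reducing the whole estimate to the control of a single contraction norm, which I then bound by a discrete convolution inequality. Throughout, $I_q$ denotes the $q$th multiple Wiener--It\^o integral, $\otimes_1$ the first contraction and $\tilde\otimes_1$ its symmetrization.

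First I would fix an isonormal Gaussian process over a real separable Hilbert space $\mathfrak H$ and elements $\epsilon_t\in\mathfrak H$ with $\langle\epsilon_s,\epsilon_t\rangle_{\mathfrak H}=R(s-t)$; since only the law of $V_n$ matters, we may realize $Y_t=I_1(\epsilon_t)$. Then $Y_t^2-\E(Y_t^2)=I_2(\epsilon_t\otimes\epsilon_t)$, so by linearity
\[
V_n=I_2(f_n),\qquad f_n:=\frac{1}{\sqrt n}\sum_{t=1}^n \epsilon_t\otimes\epsilon_t,
\]
and hence $F_n:=V_n/v_n=I_2(g_n)$ with $g_n:=f_n/v_n$. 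Using $\E[I_2(\phi)I_2(\psi)]=2\langle\phi,\psi\rangle_{\mathfrak H^{\otimes 2}}$ for symmetric kernels, together with $v_n^2=\frac{2}{n}\sum_{s,t=1}^n R(s-t)^2$, one checks $\E(F_n^2)=2\|g_n\|^2=1$.

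Next I would apply the Malliavin--Stein total-variation bound $d_{TV}(F_n,Z)\le 2\sqrt{\mathrm{Var}\bigl(\tfrac12\|DF_n\|_{\mathfrak H}^2\bigr)}$, valid for a centered variable of unit variance. For a second-chaos element the variance identity reads $\mathrm{Var}(\tfrac12\|DF_n\|^2)=8\,\|g_n\tilde\otimes_1 g_n\|^2$, and the symmetrization inequality $\|g_n\tilde\otimes_1 g_n\|\le\|g_n\otimes_1 g_n\|$ yields
\[
d_{TV}(F_n,Z)\le 2\sqrt{8}\,\|g_n\otimes_1 g_n\|_{\mathfrak H^{\otimes 2}}=4\sqrt2\,\|g_n\otimes_1 g_n\|_{\mathfrak H^{\otimes 2}}.
\]
Writing $g_n\otimes_1 g_n=\frac{1}{v_n^2 n}\sum_{s,t}R(s-t)\,\epsilon_s\otimes\epsilon_t$ and squaring gives the cyclic quadruple sum
\[
\|g_n\otimes_1 g_n\|^2=\frac{1}{v_n^4 n^2}\sum_{s,t,s',t'=1}^n R(s-t)R(s'-t')R(s-s')R(t-t').
\]

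The main work — and the step I expect to be the principal obstacle — is bounding this cyclic sum. I would freeze one index (say $s$), producing a factor $n$, and change variables to the three independent differences $u=s-t,\ v=s-s',\ w=t-t'$, for which $s'-t'=u-v+w$; this identifies the remaining triple sum with $\sum_{u,v,w}a_u a_v a_w\,a_{u-v+w}$ where $a_k:=|R(k)|\mathbf{1}_{\{|k|<n\}}$. By Parseval this equals $\frac{1}{2\pi}\int_{-\pi}^{\pi}|\widehat a(\lambda)|^4\,d\lambda$ with $\widehat a(\lambda)=\sum_k a_k e^{ik\lambda}$, and the Hausdorff--Young inequality $\|\widehat a\|_{L^4}\le\|a\|_{\ell^{4/3}}$ bounds it by $\bigl(\sum_{|k|<n}|R(k)|^{4/3}\bigr)^3$. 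Collecting the factor $n$ from the frozen index gives
\[
\|g_n\otimes_1 g_n\|\le \frac{1}{v_n^2\sqrt n}\Bigl(\sum_{k=-n+1}^{n-1}|R(k)|^{4/3}\Bigr)^{3/2},
\]
and combining with the Fourth Moment bound produces exactly the asserted inequality. The delicate points are the bookkeeping of the numerical constant (the $4\sqrt2$ arises precisely from $2\sqrt8$ in the second-chaos variance identity together with the symmetrization inequality) and a clean justification of the convolution estimate; the Fourier/Hausdorff--Young route above is the most transparent, though one could equally invoke a generalized Young inequality for the discrete convolution directly.
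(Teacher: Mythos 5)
Your proposal is correct, and it coincides with the paper's treatment in the only sense available: the paper gives no proof at all, simply citing Theorem~7.3.1 of Nourdin--Peccati, and your argument is precisely the standard proof of that theorem --- realizing $V_n/v_n$ as $I_2(g_n)$, applying the Malliavin--Stein bound $d_{TV}\le 2\sqrt{\mathrm{Var}(\tfrac12\Vert DF\Vert^2)}=4\sqrt2\,\Vert g_n\tilde\otimes_1 g_n\Vert\le 4\sqrt2\,\Vert g_n\otimes_1 g_n\Vert$, and estimating the cyclic quadruple sum. The one cosmetic deviation is that you close the convolution estimate via Parseval and Hausdorff--Young, $\Vert\widehat a\Vert_{L^4}\le\Vert a\Vert_{\ell^{4/3}}$, where the book invokes Young's inequality $\Vert a*a\Vert_{\ell^2}\le\Vert a\Vert_{\ell^{4/3}}^2$ directly; the two are equivalent routes to the same bound, and your bookkeeping of the constant $4\sqrt2$ (from $2\sqrt 8$ in the second-chaos variance identity) and of the index range $|s'-t'|<n$ is accurate.
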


The following theorem is rephrased from Theorem 5.1 and Proposition 5.2 of \cite{BNT 2010}, which gives a sufficient condition to the almost sure central limit theorem of $\bar{V}_n=\frac{V_n}{v_n}$.
\begin{thm}\label{ASCLT BN}
Let $V_n$ be given by \eqref{vn def}  and  $Z \sim \mathcal{N} (0, 1)$. Set  $v_n^2 = \E(V_n^2) $.  Assume that $R(k)\sim \abs{k}^{-\beta}L(\abs{k})$, as $\abs{k}\to \infty$, for some $\beta>\frac12$ and some slowly varying function $L(\cdot)$. 
Then $\set{\frac{V_n}{v_n}}$ satisfies an almost sure central limit theorem. 
\end{thm}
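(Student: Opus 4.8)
The plan is to derive the ASCLT directly from the abstract criterion for sequences living in a fixed Wiener chaos that underlies Theorem~5.1 and Proposition~5.2 of \cite{BNT 2010}. Since $Y_t^2-\E(Y_t^2)=R(0)\,H_2(Y_t/\sqrt{R(0)})$ with $H_2$ the second Hermite polynomial, the normalized sum $V_n$ lives in the second Wiener chaos, and for such sequences the following is sufficient: if $\set{G_n}$ has $\E(G_n)=0$, $\E(G_n^2)=1$, converges in law to $Z\sim\mathcal{N}(0,1)$, and
\[\sum_{n\ge 2}\frac{1}{n\log^2 n}\sum_{k=1}^n\frac{\abs{\E(G_kG_n)}}{k}<\infty,\]
then $\set{G_n}$ satisfies the ASCLT. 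I would apply this with $G_n=\bar V_n=V_n/v_n$.

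First I would establish the marginal convergence and control the normalization. Because $R(k)\sim\abs{k}^{-\beta}L(\abs{k})$ with $\beta>\frac12$, we have $R^2(k)\sim\abs{k}^{-2\beta}L^2(\abs{k})$ with $2\beta>1$, so $\sum_{k\in\Z}R^2(k)<\infty$ (a slowly varying factor does not affect summability at an exponent strictly larger than $1$); hence $\sigma_H^2=2\sum_{k\in\Z}R^2(k)<\infty$. Theorem~\ref{B-M} then gives $V_n\xrightarrow{law}\mathcal{N}(0,\sigma_H^2)$. Writing $v_n^2=\frac{2}{n}\sum_{s,t=1}^n R^2(s-t)=2\sum_{\abs{k}<n}(1-\tfrac{\abs{k}}{n})R^2(k)$ and applying dominated convergence on the summable weights $R^2(k)$ yields $v_n^2\to\sigma_H^2$, which is strictly positive since $R^2(0)>0$. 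Thus $v_n$ is bounded away from $0$ and $\infty$ and $G_n=V_n/v_n\xrightarrow{law}Z$.

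Next I would verify the covariance condition. For jointly centered Gaussian $Y_s,Y_t$ the Wick/Isserlis formula gives $\mathrm{Cov}(Y_s^2,Y_t^2)=2R^2(s-t)$, so for $m\le n$,
\[\E(V_mV_n)=\frac{2}{\sqrt{mn}}\sum_{s=1}^m\sum_{t=1}^n R^2(s-t)\le\frac{2}{\sqrt{mn}}\,m\sum_{k\in\Z}R^2(k)=\sigma_H^2\sqrt{\tfrac{m}{n}}.\]
Since $v_m,v_n$ are bounded below and $\E(V_mV_n)\ge0$, we get $\abs{\E(G_kG_n)}\le C\sqrt{k/n}$ for $k\le n$. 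Substituting into the criterion and using $\sum_{k=1}^n k^{-1/2}\le 2\sqrt n$,
\begin{align*}
\sum_{n\ge 2}\frac{1}{n\log^2 n}\sum_{k=1}^n\frac{\abs{\E(G_kG_n)}}{k}
&\le C\sum_{n\ge 2}\frac{1}{n^{3/2}\log^2 n}\sum_{k=1}^n\frac{1}{\sqrt k}\\
&\le C\sum_{n\ge 2}\frac{1}{n\log^2 n}<\infty,
\end{align*}
which is exactly the required summability, and the ASCLT follows.

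The only genuinely delicate point is keeping the normalizing variance non-degenerate: the whole scheme collapses if $v_n\to 0$, so the step I would treat most carefully is the Cesàro limit $v_n^2\to\sigma_H^2>0$, which rests on $\sum_{k\in\Z}R^2(k)<\infty$ together with $R^2(0)>0$. Everything else is routine diagonal-summation bookkeeping that uses only $\sum_{k\in\Z}R^2(k)<\infty$ and not the slowly varying factor itself; the hypothesis $R(k)\sim\abs{k}^{-\beta}L(\abs{k})$ with $\beta>\frac12$ enters solely to guarantee this summability and to match the regularity under which Theorem~5.1 and Proposition~5.2 of \cite{BNT 2010} are stated, so that the abstract criterion may equally be invoked as a black box.
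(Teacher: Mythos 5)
The paper does not actually prove this statement: Theorem~\ref{ASCLT BN} is quoted as a rephrasing of Theorem~5.1 and Proposition~5.2 of \cite{BNT 2010} and is used as a black box in the proof of Theorem~\ref{ASCLT 01}, so your attempt is competing with the cited reference rather than with an argument in the paper. Measured against that reference, there is one genuine gap. The abstract criterion of \cite{BNT 2010} (their Theorem~3.1) for a sequence $G_n=I_q(f_n)$ in a fixed Wiener chaos with $\E[G_n^2]=1$ and $G_n\xrightarrow{law}\mathcal{N}(0,1)$ requires \emph{two} conditions, not one: besides the covariance summability you verify, it requires the contraction condition
\begin{equation*}
\sum_{n\ge 2}\frac{1}{n\log^2 n}\,\norm{f_n\otimes_r f_n}<\infty,\qquad 1\le r\le q-1.
\end{equation*}
This is not decorative: the Ibragimov--Lifshits criterion underlying the result needs a summable quantitative bound on $\abs{\E e^{itG_n}-e^{-t^2/2}}$, and that bound is manufactured from the contraction norms via the fourth moment theorem. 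Plain convergence in law, which is all you assume, carries no rate and cannot be fed into that criterion, so the ``sufficient condition'' you state is not the theorem you are invoking.

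The omission is repairable under the stated hypothesis. For $q=2$ one has $f_n=\frac{1}{v_n\sqrt n}\sum_{t=1}^n e_t\otimes e_t$ and the standard estimate (as in the proof of Theorem~7.4.1 of \cite{Nourdin})
\begin{equation*}
\norm{f_n\otimes_1 f_n}^2\le \frac{C}{n}\Bigl(\sum_{\abs{k}<n}\abs{R(k)}^{4/3}\Bigr)^{3}.
\end{equation*}
Since $R(k)\sim\abs{k}^{-\beta}L(\abs{k})$ with $\beta>\frac12$ and $L(k)\le c\abs{k}^{\delta}$ for large $k$, this gives $\norm{f_n\otimes_1 f_n}=O(n^{-1/2})$ when $\beta>\frac34$ and $O(n^{1-2\beta+2\delta})$ when $\frac12<\beta\le\frac34$, hence $O(n^{-\epsilon})$ for $\delta$ small enough; in either case the contraction series converges. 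The rest of your argument --- the non-degeneracy of the normalization (indeed $v_n^2\ge 2R^2(0)$ for every $n$, not only in the limit), the bound $\abs{\E(G_kG_n)}\le C\sqrt{k/n}$ for $k\le n$, and the resulting summability --- is correct. With the contraction condition added and verified as above, your proof becomes a legitimate self-contained derivation of the cited result for the second chaos.
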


\section{Proofs of the Strong Consistency, The Asymptotic Normality  and ASCLT}
\begin{lem}\label{n-a} 
Let $(Y_t)_{t\in\mathbb N}$ and $\zeta$ be given by \eqref{solution}.
  Then for all $\varepsilon >0 $ there exists a random variable $C_{\varepsilon}$ such that
\begin{equation}\label{to-0}
     \left\vert   \zeta \sum_{t=1}^n \theta^t Y_t \right\vert
     \le C_{\varepsilon} n^{  \varepsilon} \quad \textit{a.s.}
\end{equation}  for all $n\in \N$, and moreover, $\E |C_{\varepsilon}|^p <\infty$ for all $p\ge 1$.
\end{lem}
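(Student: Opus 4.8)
The plan is to reduce everything to the almost-sure and $L^p$ control of the stationary Gaussian sequence $(Y_t)$ combined with the exponential decay of the weights $\theta^t$. Recall from \eqref{solution} that $Y_t=\sum_{j\ge 0}\theta^j\xi_{t-j}$ is a centered stationary Gaussian sequence with common variance $\E(Y_t^2)=f(\theta)<\infty$, and that imposing $X_0=0$ in \eqref{solution} forces $\zeta=-Y_0$, which is again a centered Gaussian variable. In particular $\zeta$ and each $Y_t$ possess moments of every order, with $\E\abs{Y_t}^p$ independent of $t$ by stationarity.

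First I would observe that, since $0<\theta<1$, the random series $S:=\sum_{t=1}^\infty\theta^t\abs{Y_t}$ converges almost surely and in every $L^p$. Indeed, by Minkowski's inequality and stationarity,
\begin{equation*}
\left(\E S^p\right)^{1/p}\le \sum_{t=1}^\infty \theta^t\left(\E\abs{Y_t}^p\right)^{1/p}=\left(\E\abs{Y_0}^p\right)^{1/p}\frac{\theta}{1-\theta}<\infty ,
\end{equation*}
because $\E\abs{Y_t}^p$ is a finite constant independent of $t$. Consequently, for every $n$ the partial sum is dominated by its limit: $\abs{\sum_{t=1}^n\theta^t Y_t}\le S$.

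Next I would set $C_\varepsilon:=\abs{\zeta}\,S$ (which in fact does not depend on $\varepsilon$). Then
\begin{equation*}
\left\vert \zeta\sum_{t=1}^n\theta^t Y_t\right\vert\le \abs{\zeta}\,S=C_\varepsilon\le C_\varepsilon\, n^\varepsilon
\end{equation*}
for all $n\in\N$, which is \eqref{to-0}. The integrability claim follows from the Cauchy–Schwarz inequality: since $\zeta$ is Gaussian and $S\in L^{2p}$ by the previous estimate,
\begin{equation*}
\E\abs{C_\varepsilon}^p=\E\left(\abs{\zeta}^p S^p\right)\le \left(\E\abs{\zeta}^{2p}\right)^{1/2}\left(\E S^{2p}\right)^{1/2}<\infty
\end{equation*}
for every $p\ge 1$.

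The argument has no serious obstacle; the only points requiring care are the passage from the finite partial sums to the a.s.\ finite limit $S$ (handled by Minkowski together with stationarity of $(Y_t)$) and the fact that $\zeta$ and $S$ are \emph{not} independent, so that their joint moments must be estimated by Cauchy–Schwarz rather than by factorisation. It is worth noting that the stated bound $C_\varepsilon n^\varepsilon$ is in fact weaker than what this argument delivers—a bound uniform in $n$—because the exponential decay of $\theta^t$ already produces a convergent series. Should one prefer an argument that genuinely exhibits the exponent $\varepsilon$, the alternative is to control $\max_{1\le t\le n}\abs{Y_t}$: a Borel–Cantelli argument based on the Gaussian tail estimate $P(\abs{Y_t}>t^\varepsilon)\le \exp(-ct^{2\varepsilon})$ shows that $\sup_t\abs{Y_t}/t^\varepsilon$ is a.s.\ finite with moments of all orders, whence $\abs{\sum_{t=1}^n\theta^t Y_t}\le (\max_{1\le t\le n}\abs{Y_t})\sum_{t\ge1}\theta^t\le C'_\varepsilon\, n^\varepsilon$; but the Minkowski route above is shorter and yields all moments directly.
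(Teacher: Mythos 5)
Your proof is correct, but it follows a genuinely different route from the paper's. The paper first bounds the partial sums in $L^{2p}$ uniformly in $n$ via the equivalence of Gaussian moments, $\bigl\Vert \sum_{t=1}^n \theta^t Y_t \bigr\Vert_{2p} \le c_p \bigl\Vert \sum_{t=1}^n \theta^t Y_t \bigr\Vert_{2} \le C_\theta$, absorbs $\zeta$ by H\"older exactly as you do by Cauchy--Schwarz, and then invokes Lemma~2.1 of Kloeden--Neuenkirch, a Borel--Cantelli-type result that converts a uniform-in-$n$ $L^p$ bound (for all $p$) into the almost sure bound $C_\varepsilon n^{\varepsilon}$ with $C_\varepsilon\in L^p$; the authors even have to remark that this lemma remains valid with exponent $\alpha=0$. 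You instead dominate every partial sum by the single almost surely finite random variable $S=\sum_{t\ge 1}\theta^t\abs{Y_t}$, whose $L^p$ norm you control by Minkowski's inequality and stationarity, and set $C_\varepsilon=\abs{\zeta}S$. Your argument is more elementary --- it needs neither the Gaussian hypercontractivity step nor the external lemma --- and it proves a strictly stronger statement, namely a bound uniform in $n$ rather than $O(n^{\varepsilon})$. As you correctly observe, the factor $n^{\varepsilon}$ is superfluous here because the weights $\theta^t$ are summable; the paper's heavier machinery would only become necessary if the weights decayed polynomially, in which case the maximal-function route you sketch at the end would be the natural substitute.
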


\begin{proof} 
Fix $p\ge 1$ and denote by $\| \cdot \|_{p}$ the $L^p$-norm. Since  $\sum_{t=1}^n \theta^t Y_t$ is Gaussian,
 we have
           \begin{align*}
   \left\Vert \sum_{t=1}^n \theta^t Y_t \right\Vert_ { {2p}}
   \le c_2 \left\Vert \sum_{t=1}^n \theta^t Y_t \right\Vert _2
    =c_2 \sqrt { \sum_{i,j=1}^n \theta^{i+j} \E [ Y_i Y_j ]  } \le C_{\theta},
            \end{align*}  for all $n\in \N$.
The H\"older equality implies that
\begin{align}\label{pnorm}
     \left\Vert \zeta \sum_{t=1}^n \theta^t Y_t \right\Vert_ {p}
     \le \Vert \zeta \Vert _{{2p}}   \left\Vert \sum_{t=1}^n \theta^t Y_t \right\Vert _{{2p}}
     \le c_1  \left\Vert \sum_{t=1}^n \theta^t Y_t \right\Vert _{{2p}}  \le C_{\theta},
\end{align}
 which implies \eqref{to-0} from Lemma 2.1 of \cite{Kloeden}. In fact,  it is easy to check that the conclusion of Lemma 2.1 of \cite{Kloeden} is valid if its assumption $\alpha>0$ is changed to $\alpha\ge 0$.
\end{proof}

\noindent{\it Proof of Theorem~\ref{consist}.\,}
It is obvious that the covariance function of $Y_t$ is
\begin{equation*}
    R(k)=\mathrm{Cov}(Y_t,Y_{t+k})=\sum_{i,j=0}^\infty \theta^{i+j}\rho(k-i+j).
\end{equation*}
 Since $ \rho(k)\to 0$ as $k\to \infty$ and  when $0<\theta<1$, $$\sum_{i,j=0}^{\infty}\theta^{i+j} \rho(k-i+j)\le \sum_{i,j=0}^{\infty}\theta^{i+j} <\infty,$$
the dominated convergence theorem implies that
 \begin{equation*}
   \lim _{k\to \infty}  R(k) = 0 .
\end{equation*}
 Hence, the stationary Gaussian sequence $Y_{t}$ is ergodic. Since $\E Y_t^2 = f(\theta)$, we obtain
\begin{align*}
\lim_{n\to \infty}  \frac{1}{n}\sum_{t=1}^{n}Y_t^2   =  f(\theta)   \quad a.s.
\end{align*}
   Lemma \ref{n-a} implies that  for $0<\varepsilon <1$ there exists a random variable $C_{\varepsilon}$ such that $$\abs{  \frac 1n \zeta\sum_{t=1}^{n} \theta^t Y_t }\le C_{\varepsilon} n^{ -1+ \varepsilon},\quad a.s.$$
   Hence,  as $n\to \infty$,  $$ \frac 1n \zeta\sum_{t=1}^{n} \theta^t Y_t  \to 0$$
   almost surely.
Thus, \begin{align}\label{x2 moment limit}
\lim_{ n\to \infty}   \frac{1}{n}\sum_{t=1}^{n}X_t^2    =  f(\theta)  \quad a.s.,
\end{align}
which, together with the continuous mapping theorem, implies that  as $n\to \infty$,  $$\tilde{\theta}_n\to \theta,\quad a.s.$$
{\hfill\large{$\Box$}}
\begin{rem}\label{rem 3.2}
Since $ f(\theta),\,\theta\in (0,1) $ is a strictly increasing function, the limit \eqref{x2 moment limit} implies that when $n$ is large enough, $ \frac{1}{n}\sum_{t=1}^{n}X_t^2  $ belongs to the range of $ f(\theta)$. Hence, when $n$ is large enough,
\begin{equation}
 (0,1)\ni \tilde{\theta}_n=f^{-1}( \frac{1}{n}\sum_{t=1}^{n}X_t^2  ),\quad a.s.
\end{equation}
\end{rem}
\begin{lem}
 Under Hypothesis \ref{assume},
   the stationary solution $Y_t$ to the model (\ref{model}) exhibits linear long-range dependence. Namely,
\begin{equation}\label{long}
   h_Y(\lambda)  \sim C_{\theta,H} L(\lambda^{-1}) \vert \lambda \vert ^{ 1-2H } , \quad \textit{as} \,\, \lambda \to 0,
\end{equation} where the constant $C_{\theta , H} >0$ depends on $\theta$ and $H$.
\end{lem}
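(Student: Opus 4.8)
The plan is to exploit the fact that $Y_t=\sum_{j=0}^{\infty}\theta^{j}\xi_{t-j}$ is nothing but the output of a one-sided linear filter applied to the long-memory noise $\xi$, and then to read off $h_Y$ from $h_\xi$ through the standard transfer formula. The filter coefficients are $\theta^{j}$, whose transfer function is
\[
  A(\lambda)=\sum_{j=0}^{\infty}\theta^{j}e^{-ij\lambda}=\frac{1}{1-\theta e^{-i\lambda}},
\]
the geometric series converging absolutely since $0<\theta<1$. The filtering theorem for stationary sequences then gives
\[
  h_Y(\lambda)=\abs{A(\lambda)}^{2}\,h_\xi(\lambda)=\frac{h_\xi(\lambda)}{\abs{1-\theta e^{-i\lambda}}^{2}},\qquad \lambda\in[-\pi,\pi].
\]
First I would establish this identity, either by invoking the filtering theorem directly or, more elementarily, by expanding $R(k)=\sum_{i,j\ge0}\theta^{i+j}\rho(k-i+j)$ and checking that its Fourier series equals $\abs{A(\lambda)}^{2}h_\xi(\lambda)$; the absolute summability of $(\theta^{j})$ makes either route routine.

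Next I would analyse the gain near the origin. Since $\abs{1-\theta e^{-i\lambda}}^{2}=1-2\theta\cos\lambda+\theta^{2}$ is continuous and tends to $(1-\theta)^{2}>0$ as $\lambda\to0$, we have $\abs{A(\lambda)}^{2}\to (1-\theta)^{-2}$, a strictly positive finite limit. Combining this with the low-frequency behaviour of the noise, which by Hypothesis~\ref{assume} together with Theorem~\ref{sp-cov} (equivalently the display recorded just after the hypothesis) reads $h_\xi(\lambda)\sim C_H L(\lambda^{-1})\abs{\lambda}^{1-2H}$ as $\lambda\to0$, and using that multiplication by a factor converging to the nonzero constant $(1-\theta)^{-2}$ preserves asymptotic equivalence, I obtain
\[
  h_Y(\lambda)\sim \frac{C_H}{(1-\theta)^{2}}\,L(\lambda^{-1})\abs{\lambda}^{1-2H},\qquad \lambda\to0.
\]
This is precisely \eqref{long} with $C_{\theta,H}=C_H/(1-\theta)^{2}>0$, and since $L(\lambda^{-1})$ is slowly varying at the origin and $H\in(\frac12,1)$, it exhibits the linear long-range dependence of Definition~\ref{delong}.

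The only real obstacle lies in the first step: justifying the transfer identity cleanly and confirming that the right-hand side is genuinely a spectral density (nonnegative and integrable on $[-\pi,\pi]$). Once that is in place the conclusion is immediate, the point being that the AR(1) gain is harmless at frequency zero, so the long-memory exponent $1-2H$ and the slowly varying factor $L(\lambda^{-1})$ are inherited unchanged from $\xi$ and only the constant is rescaled by the factor $(1-\theta)^{-2}$.
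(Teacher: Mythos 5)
Your proposal is correct and follows essentially the same route as the paper: both write $h_Y(\lambda)=\abs{1-\theta e^{-i\lambda}}^{-2}h_\xi(\lambda)$, note that the AR(1) gain tends to $(1-\theta)^{-2}$ at the origin, and combine this with the asymptotics $h_\xi(\lambda)\sim C_H L(\lambda^{-1})\abs{\lambda}^{1-2H}$ supplied by Theorem~\ref{sp-cov}(1). The only difference is that you spell out the justification of the transfer identity, which the paper simply asserts.
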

\begin{proof}
The spectral density of  the stationary solution $Y_t$ to the model (\ref{model}) satisfies
\begin{equation*}
 h_Y(\lambda) =\vert 1-\theta e^{-i\lambda} \vert ^{-2} h_{\xi}(\lambda).
 \end{equation*}
  Under Hypothesis \ref{assume}, Theorem~\ref{sp-cov}(1) implies that as $ \lambda \to 0 $,
  \begin{equation*}
h_{\xi}(\lambda) \sim C_H L(\lambda^{-1}) \vert \lambda \vert ^{ 1-2H }.
  \end{equation*}
Hence,  we obtain that  as $ \lambda \to 0 $,
$$h_Y(\lambda) \sim (1-\theta)^{-2}  h_{\xi}(\lambda) \sim C_{\theta,H} L(\lambda^{-1}) \vert \lambda \vert ^{ 1-2H }.$$
\end{proof}

\begin{lem} \label{lem 3.3}    
 Let  $Y_t$ be   the stationary solution to the model (\ref{model}) and $R(k)=\E(Y_kY_0)$. Let  $V_n $ be given in Theorem~\ref{B-M} and $v_n^2=\E(V_n^2)$.  When $ H\in (\frac 12 , \frac 34)$,
     \begin{equation}\label{fi}
\lim_{n\to \infty} v_n^2= 2\sum_{k\in \Z} R^2(k) < \infty  .
    \end{equation}
\end{lem}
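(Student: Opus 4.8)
The plan is to evaluate $v_n^2$ exactly by exploiting the Gaussianity of $(Y_t)$, rewrite the resulting double sum as a Cesàro-weighted series in $R^2(k)$, and then pass to the limit by dominated convergence. The only point at which the restriction $H\in(\frac12,\frac34)$ is genuinely used is in establishing the summability of $\sum_{k\in\Z}R^2(k)$; once that is in hand, removing the Cesàro weights is routine.

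First I would compute the variance directly. Since $(Y_t)$ is a centered stationary Gaussian sequence, for any $s,t$ the pair $(Y_s,Y_t)$ is jointly Gaussian with mean zero, so that $\mathrm{Cov}(Y_s^2,Y_t^2)=2[\mathrm{Cov}(Y_s,Y_t)]^2=2R^2(s-t)$. Substituting this into the definition of $V_n$ gives
\begin{equation*}
v_n^2=\frac1n\sum_{s,t=1}^n\mathrm{Cov}(Y_s^2,Y_t^2)=\frac2n\sum_{s,t=1}^nR^2(s-t).
\end{equation*}
Collecting the pairs $(s,t)$ according to the value $k=s-t$, each $k$ with $|k|\le n-1$ being attained exactly $n-|k|$ times, this becomes
\begin{equation*}
v_n^2=2\sum_{k=-(n-1)}^{n-1}\Bigl(1-\frac{|k|}{n}\Bigr)R^2(k).
\end{equation*}

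The hard part will be to show $\sum_{k\in\Z}R^2(k)<\infty$, and this is exactly where $H<\frac34$ enters. By the preceding lemma, the spectral density satisfies $h_Y(\lambda)\sim C_{\theta,H}L(\lambda^{-1})|\lambda|^{1-2H}$ as $\lambda\to0$, so Hypothesis~\ref{assume} supplies the slow variation and bounded-variation conditions needed to invoke Theorem~\ref{sp-cov}(2). That theorem yields $R(k)\sim L_R(k)|k|^{2H-2}$ as $k\to\infty$ with $L_R$ slowly varying, whence $R^2(k)\sim L_R^2(k)|k|^{4H-4}$. Since $L_R^2$ is again slowly varying, the series $\sum_k|k|^{4H-4}L_R^2(k)$ converges precisely when $4-4H>1$, that is, when $H<\frac34$; this establishes $\sum_{k\in\Z}R^2(k)<\infty$.

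Finally I would remove the Cesàro weights. Setting $a_{n,k}=(1-\tfrac{|k|}{n})R^2(k)\mathbf{1}_{\{|k|\le n-1\}}$, so that $v_n^2=2\sum_{k\in\Z}a_{n,k}$, one has $a_{n,k}\to R^2(k)$ as $n\to\infty$ for each fixed $k$, while $0\le a_{n,k}\le R^2(k)$ uniformly in $n$. Because $\sum_k R^2(k)<\infty$ provides a summable dominating function, the dominated convergence theorem for series gives $\sum_k a_{n,k}\to\sum_k R^2(k)$, i.e. $v_n^2\to2\sum_{k\in\Z}R^2(k)$. The genuine obstacle is thus the single summability estimate of the third step; the first and last steps are, respectively, an exact algebraic identity and a standard limiting passage.
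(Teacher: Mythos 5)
Your proposal is correct and follows essentially the same route as the paper: the exact Wick/Isserlis identity $v_n^2=2\sum_{|k|<n}(1-|k|/n)R^2(k)$ (which the paper simply cites from (7.2.6) of Nourdin--Peccati), Theorem~\ref{sp-cov}(2) applied to the spectral density asymptotics of $Y$ to get $R(k)\sim L_R(k)|k|^{2H-2}$, and summability of $R^2$ for $H<\tfrac34$ via the slow variation of $L_R^2$ (the paper makes this explicit with the Potter-type bound $L(k)\le c|k|^{\delta}$, which you may as well quote rather than asserting the convergence criterion for slowly varying weights outright).
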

\begin{proof}
It is well known the following two identities hold:
 \begin{align}
     v_n^2 &= \frac 2n \sum_{k,l=1}^n  R^2(k-l)  = 2 \sum_{|k|<n} \left(  1- \frac{|k|}{n}  \right)  R^2(k) , \nonumber\\
     \lim_{n\to \infty} v_n^2&= 2\sum_{k\in \Z} R^2(k),\quad \text{ if } \quad \sum_{k\in \Z} R^2(k) < \infty, \label{v2 expression}
  \end{align}
 see, for example, (7.2.6) of \cite{Nourdin}.

 Thus, to check (\ref{fi}) for $H\in (\frac 12 , \frac 34)$,  we need only to show the condition $\sum_{k\in \Z} R^2(k) < \infty $ holds. In fact,
Theorem~\ref{sp-cov} (2) and the identity (\ref{long}) imply that the covariance function of $Y_t$ satisfies that when $ \frac{1}{2} <H<1 $,
\begin{equation} \label{equ}
    R(k) \sim C'_{\theta,H} L(k) \vert k \vert ^ {2H-2}, \quad \textit{as} \,\, k\to \infty.
\end{equation}
Recall that $ L(k) < c \vert  k \vert ^{\delta}$ for any fixed $\delta >0 $ and $k$ large enough (see, for example, \cite[p.277]{feller71} ).
  Hence, $$ \sum_{k\in \Z} R^2(k)<\infty $$
  if and only if $$4H-4 + 2 \delta < -1.$$ Note that $\delta>0$ is arbitrary, we obtain that when $\frac12<H < \frac 34$,
the condition $\sum_{k\in \Z} R^2(k) < \infty $ holds.
\end{proof}

\noindent{\it Proof of Theorem~\ref{normal}.\,}
Using the identity (\ref{solution}), we have that
\begin{equation}\label{deompos}
 \frac{1}{\sqrt{n}}  \sum_{t=1}^{n}\left(X_t^2-f(\theta)\right) = \frac{1}{\sqrt{n}}  \sum_{t=1}^{n}\left(Y_t^2-f(\theta)\right)  + \frac{2\zeta}{\sqrt n}\sum_{t=1}^{n} \theta^t Y_t +  \frac{\zeta^2}{\sqrt n}\sum_{t=1}^{n} \theta^{2t}.
\end{equation}
Theorem \ref{B-M} and Lemma~\ref{lem 3.3} imply that  when $ \frac{1}{2} <H< \frac{3}{4} $, as $n\to \infty$,
  \begin{equation*}
  \frac{1}{\sqrt{n}}  \sum_{t=1}^{n}\left(Y_t^2-f(\theta)\right)  \xrightarrow{law} \mathcal{N} ( 0, \sigma_H^2).
  \end{equation*}
 Lemma \ref{n-a} implies that  as $n\to \infty$,
  \begin{equation}\label{limit 1}
   \abs{ \frac{\zeta}{\sqrt n}\sum_{t=1}^{n} \theta^t Y_t }\le C_{\epsilon} n^{\epsilon -\frac12}\to 0.  \end{equation}
It is clear that  as $n\to \infty$,
 \begin{equation} \label{limit 11}
 \frac{\zeta^2}{\sqrt n}\sum_{t=1}^{n} \theta^{2t}\to 0.  \end{equation}
Substituting the above three limits into (\ref{deompos}), we deduce from Slutsky's theorem that as $n\to \infty$,
\begin{equation*}
 \frac{1}{\sqrt{n}}  \sum_{t=1}^{n}\left(X_t^2-f(\theta)\right) \xrightarrow{law} \mathcal{N} ( 0, \sigma_H^2 ),
\end{equation*} which implies the desired (\ref{1}) from the delta method.
 {\hfill\large{$\Box$}}

\noindent{\it Proof of Theorem~\ref{ASCLT 01}.\,} The ASCLT can be obtained by arguments similar to those of Theorem 4.3 in \cite{ces 15}.

First, Eq.\eqref{equ} and Theorem~\ref{ASCLT BN} imply that when $ \frac{1}{2} <H<\frac34 $,
\begin{equation*}
\set{\frac{V_n}{v_n}\,:n\ge 1}
\end{equation*}
satisfies the ASCLT.

Second, by \eqref{v2 expression}, \eqref{deompos}-\eqref{limit 11}, we have that $$\frac{1}{\sqrt{n}\sigma_H}  \sum_{t=1}^{n}\left(X_t^2-f(\theta)\right) =\frac{V_n}{v_n}\frac{v_n}{\sigma_H}+ \frac{2\zeta}{\sqrt n \sigma_H}\sum_{t=1}^{n} \theta^t Y_t +  \frac{\zeta^2}{\sqrt n \sigma_H}\sum_{t=1}^{n} \theta^{2t},$$
which, together with Theorems~3.1 and 3.2 of \cite{ces 15}, implies that \begin{equation*}
\set{\frac{1}{\sqrt{n}\sigma_H}  \sum_{t=1}^{n}\left(X_t^2-f(\theta)\right) \,:n\ge 1}
\end{equation*}
satisfies the ASCLT.

Third,  the mean value theorem implies that
$$ \frac{ f'(\theta) \sqrt n  (\tilde{\theta}_n  -\theta)}{ \sigma_H }=\frac{ f'(\theta)  \sqrt{n}}{f'(\eta_n) \sigma_H }\Big(\frac{1}{n}\sum_{t=1}^n X_t^2- f(\theta) \Big)
=\frac{ f'(\theta) }{f'(\eta_n)}\frac{1}{\sqrt{n}\sigma_H}  \sum_{t=1}^{n}\left(X_t^2-f(\theta)\right) $$
where $\eta_n$ is a random variable between $\frac{1}{n}\sum_{t=1}^n X_t^2$ and $f(\theta)$. The convergence \eqref{x2 moment limit} leads to
$\frac{ f'(\theta) }{f'(\eta_n)}\to 1$ almost surely as $n\to \infty$. It follows from Theorems~3.1 of \cite{ces 15} that
 \begin{equation*}
\set{\frac{ f'(\theta) \sqrt n  (\tilde{\theta}_n  -\theta)}{ \sigma_H } \,:n\ge 1}
\end{equation*}
satisfies the ASCLT.

 {\hfill\large{$\Box$}}

\section{The Berry-Ess\'een Bound}
The following Fourth Moment Berry-Ess\'een bound is similar to Corollary 7.4.3 of \cite{Nourdin}.
\begin{prop}\label{prop Fourth Moment B-E}
Let  $V_n $ be given in Theorem~\ref{B-M} and $v_n^2=\E(V_n^2)$.
When $H\in (\frac 12,  \frac 34  ) $,  there exists a constant $c_H > 0$ such
that, for all $n \ge 2$:
\begin{equation}\label{upper bound be}
   d_{TV} (V_n /v_n, Z) \le c_H  \varphi_1 (n) ,\end{equation}
   where
   \begin{equation*}
  \varphi_1 (n)=
   \left\{
         \begin{array}{ll}
n^ {- \frac 12}   ,   & \quad \textit{if} \,\,  H\in   \left( \frac 12, \frac 58 \right) ;\\
 \frac{1}{n^{3-4H-}}, & \quad \textit{if} \,\,  H\in \left[ \frac 58, \frac 34 \right) .
            \end{array}
   \right.
   \end{equation*}
\end{prop}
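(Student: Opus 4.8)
The plan is to apply the Fourth Moment Berry-Ess\'een bound of Theorem~\ref{rate} directly and then estimate the resulting sum $\sum_{k=-n+1}^{n-1}|R(k)|^{4/3}$ using the precise tail asymptotics \eqref{equ} of the covariance function. By Theorem~\ref{rate} we immediately have
\begin{equation*}
d_{TV}(V_n/v_n, Z) \le \frac{4\sqrt 2}{v_n^2 \sqrt n}\left(\sum_{k=-n+1}^{n-1}|R(k)|^{4/3}\right)^{3/2}.
\end{equation*}
By Lemma~\ref{lem 3.3}, when $H\in(\frac12,\frac34)$ we know $v_n^2 \to \sigma_H^2 \in (0,\infty)$, so $v_n^2$ is bounded below by a positive constant for $n$ large; thus the whole problem reduces to controlling the growth in $n$ of the quantity $S_n := \sum_{k=-n+1}^{n-1}|R(k)|^{4/3}$, and then inserting the factor $n^{-1/2}$.

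The key step is a careful case analysis on whether the series $\sum_{k\in\Z}|R(k)|^{4/3}$ converges. From \eqref{equ} we have $R(k)\sim C'_{\theta,H} L(k)|k|^{2H-2}$, so $|R(k)|^{4/3}$ behaves like $|k|^{(8H-8)/3}$ up to a slowly varying factor. The exponent $(8H-8)/3$ equals $-1$ precisely when $H=\frac58$. Hence I split as follows. When $H\in(\frac12,\frac58)$, the exponent satisfies $(8H-8)/3 < -1$, so (absorbing the slowly varying factor into an arbitrarily small power, exactly as done in the proof of Lemma~\ref{lem 3.3} via $L(k)<c|k|^\delta$) the series $\sum_k |R(k)|^{4/3}$ converges. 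Then $S_n$ is bounded by a constant, and $S_n^{3/2}$ is bounded, giving $d_{TV}\le c_H n^{-1/2}$, the first branch of $\varphi_1(n)$. When $H\in[\frac58,\frac34)$, the series diverges (or is borderline), and I estimate the partial sum: $S_n \le c\, n^{(8H-8)/3 + 1 + \delta} = c\, n^{(8H-5)/3+\delta}$ for arbitrary small $\delta>0$, where again the $\delta$ absorbs the slowly varying function and handles the borderline $H=\frac58$ logarithm. Raising to the power $3/2$ gives $S_n^{3/2}\le c\, n^{(8H-5)/2 + \delta'}$, and multiplying by $n^{-1/2}$ yields $d_{TV}\le c_H\, n^{(8H-6)/2+\delta'} = c_H\, n^{-(3-4H)+\delta'}$, which is the second branch $n^{-(3-4H-)}$.

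The main obstacle, and the place requiring the most care, is the rigorous bookkeeping of the slowly varying factor $L(k)$ across the $4/3$-power and the subsequent summation, especially at the transition exponent $H=\frac58$ where the pure power sum $\sum_k |k|^{-1}$ diverges only logarithmically. The device throughout is the standard bound $L(k)<c|k|^\delta$ for every fixed $\delta>0$ and $k$ large (used already in Lemma~\ref{lem 3.3}); this replaces the exact slowly varying asymptotics by an arbitrarily small polynomial loss, which is exactly why the exponent in the statement carries the ``$-$'' notation rather than a clean power. One should also confirm that the finitely many small-$k$ terms, where the asymptotic \eqref{equ} does not yet apply, contribute only a bounded amount and therefore do not affect the growth rate. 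Once these estimates are assembled, the two cases combine to give precisely $\varphi_1(n)$ as stated.
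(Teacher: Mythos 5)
Your proposal is correct and follows essentially the same route as the paper: apply Theorem~\ref{rate}, use Lemma~\ref{lem 3.3} to bound $v_n^2$ away from zero, control the slowly varying factor via $L(k)<c|k|^{\delta}$, and split at $H=\tfrac58$ according to whether $\sum_k|R(k)|^{4/3}$ converges. Your exponent bookkeeping ($4H-\tfrac52+2\delta$ for the $\tfrac32$-power of the partial sum, hence $n^{4H-3+2\delta}$ overall) matches the paper's computation exactly.
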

\begin{rem}
When the  function slowly varying at infinity $L(k)$ degenerates to a positive constant as $k$ large enough, we can improve the upper bound  $\frac{1}{n^{3-4H-}}$ to $ \frac{1}{n^{3-4H}}$ in the case of $H\in \left( \frac 58, \frac 34 \right)$.
\end{rem}
\begin{proof}
Since $ L(k) < c \vert  k \vert ^{\delta}$ for any fixed $\delta >0 $ and $k$ large enough, we have that
\begin{equation}\label{upper bound Rk}
\abs{R(k) }^{\frac43}\le  C |k|^{ \frac43 (2H-2 + \delta)}. \end{equation}
When  $  H\in (\frac12,\,\frac58) $, we can take $\delta >0 $ small enough such that $ \frac43 (2H-2 + \delta)<-1 $ and the following series converges,
$$\sum_{k\in \Z}\abs{R(k) }^{\frac43} <\infty .$$
Together with the limit \eqref{fi}, we obtain form Theorem \ref{rate} that the desired bound \eqref{upper bound be} holds.\\
When $ 5/8 \le H < 3/4$, the inequality \eqref{upper bound Rk} implies that
$$\left( \sum_{k=-n+1}^{n-1} |R(k)|^{\frac 43} \right)^{\frac 32} \le C n^{4H-\frac 52+2\delta}. $$
Again by \eqref{fi} and Theorem \ref{rate}, we have the desired bound \eqref{upper bound be} since $\delta>0$ is arbitary.
\end{proof}
The following result is Lemma~2 of \cite{changrao89}.
\begin{lem}\label{lem 4.1}
For any random variable $\xi$, $\eta$ and real constant $a>0$,
\begin{equation}
\sup_{u\in \R}\abs{P(\xi+\eta\le u)-\Phi(u)}\le \sup_{u\in \R}\abs{P(\xi \le u)-\Phi(u)}+ P(\abs{\eta}>a)+\frac{a}{\sqrt{2\pi}},
\end{equation} where $\Phi(u)$ stands for the standard normal distribution function.
\end{lem}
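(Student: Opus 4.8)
The plan is to prove the two-sided estimate by splitting the probability $P(\xi+\eta\le u)$ according to whether the perturbation $\eta$ is small or large, and then comparing $\Phi$ at shifted arguments using only the fact that the standard normal density is bounded by $1/\sqrt{2\pi}$. The whole argument is elementary smoothing; no moment or independence assumptions on $\xi,\eta$ are needed.

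First I would establish the upper direction. Fix $u\in\R$ and decompose
\begin{equation*}
P(\xi+\eta\le u) = P(\xi+\eta\le u,\ \abs{\eta}\le a) + P(\xi+\eta\le u,\ \abs{\eta}> a).
\end{equation*}
On the event $\set{\abs{\eta}\le a}$ one has $-a\le \eta$, hence $\xi-a\le \xi+\eta$, so that $\set{\xi+\eta\le u}\cap\set{\abs{\eta}\le a}\subseteq\set{\xi\le u+a}$. This gives
\begin{equation*}
P(\xi+\eta\le u)\le P(\xi\le u+a)+P(\abs{\eta}>a).
\end{equation*}
Inserting $\pm\,\Phi(u+a)$ and using $\Phi(u+a)-\Phi(u)=\int_u^{u+a}\frac{1}{\sqrt{2\pi}}e^{-x^2/2}\,dx\le \frac{a}{\sqrt{2\pi}}$, I obtain
\begin{equation*}
P(\xi+\eta\le u)-\Phi(u)\le \sup_{v\in\R}\abs{P(\xi\le v)-\Phi(v)}+P(\abs{\eta}>a)+\frac{a}{\sqrt{2\pi}}.
\end{equation*}

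Next I would treat the lower direction symmetrically. On $\set{\abs{\eta}\le a}$ one has $\eta\le a$, so $\xi+\eta\le \xi+a$, and therefore $\set{\xi\le u-a}\cap\set{\abs{\eta}\le a}\subseteq\set{\xi+\eta\le u}$, which yields $P(\xi+\eta\le u)\ge P(\xi\le u-a)-P(\abs{\eta}>a)$. Inserting $\pm\,\Phi(u-a)$ and using $\Phi(u)-\Phi(u-a)\le \frac{a}{\sqrt{2\pi}}$ produces the matching lower estimate
\begin{equation*}
P(\xi+\eta\le u)-\Phi(u)\ge -\sup_{v\in\R}\abs{P(\xi\le v)-\Phi(v)}-P(\abs{\eta}>a)-\frac{a}{\sqrt{2\pi}}.
\end{equation*}
Combining the two displays and taking the supremum over $u\in\R$ of $\abs{P(\xi+\eta\le u)-\Phi(u)}$ gives the asserted bound.

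The only genuine point requiring care is keeping the direction of each set inclusion straight: the shift in the argument of $\Phi$ goes to $u+a$ for the upper bound but to $u-a$ for the lower bound, and in both cases the cost of the shift is controlled by the uniform bound $\tfrac{1}{\sqrt{2\pi}}$ on the Gaussian density. There is no analytic obstacle beyond this bookkeeping, so I expect the proof to be short and self-contained.
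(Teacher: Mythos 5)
Your proof is correct. Note that the paper does not prove this lemma at all---it simply cites it as Lemma~2 of Chang and Rao (1989)---so there is no in-paper argument to compare against; your elementary smoothing argument (splitting on $\set{\abs{\eta}\le a}$, using the two set inclusions $\set{\xi+\eta\le u}\cap\set{\abs{\eta}\le a}\subseteq\set{\xi\le u+a}$ and $\set{\xi\le u-a}\cap\set{\abs{\eta}\le a}\subseteq\set{\xi+\eta\le u}$, and paying $a/\sqrt{2\pi}$ for each shift of $\Phi$ via the uniform bound on the Gaussian density) is the standard self-contained derivation, with both directions and all signs handled correctly.
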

\noindent{\it Proof of Theorem~\ref{B-E}.\,}
The Berry-Esseen bound (\ref{l34}) can be obtained by arguments similar to those of Theorem 3.2 in \cite{Sott}. 
Denote  \begin{equation*}
  A:= P\left\{ \frac{ f'(\theta) \sqrt n  (\tilde{\theta}_n  -\theta)}{ \sigma_H } \le z \right\} -   P\{ Z\le z \}.
\end{equation*}
Since $\tilde{\theta}_n \in( 0,\,1) $ almost surely (see Remark~\ref{rem 3.2}), we shall suppose that  $z\in \mathcal{D}$:
\begin{equation}\label{Domain D}
\mathcal{D}:=\set{z:\,\,-\frac { f'(\theta)\sqrt n}{\sigma_H}  \theta  < {z} < \frac { f'(\theta)\sqrt n}{\sigma_H} \Big(\theta \wedge (1-\theta)\Big)}.
\end{equation}
Otherwise, the upper-tail inequality for standard normal distribution  $$P(Z \ge t) \le \frac{e^{-\frac{t^2}{2}}} {t\sqrt{2\pi}},\quad t>0$$ yields
\begin{equation*}
   |A|= P\{ Z> \abs{z} \} \le \frac {C}{\sqrt n}.
\end{equation*}
Since $f(\theta)$ is strictly increasing and continuous,  we have by \eqref{estimator} for the formula of $\tilde{\theta}_n$
\begin{align*}
A & = P\left\{ \frac{ f'(\theta) \sqrt n  (\tilde{\theta}_n  -\theta)}{ \sigma_H } \le z \right\} -   P\{ Z\le z \}  \\
  & =   P\left\{ \tilde{\theta}_n    \le   \theta +  \frac{ \sigma_H}{f'(\theta) \sqrt n } z    \right\} -   P\{ Z\le z \}  \\
 & =   P\left\{ \frac 1n  \sum_{t=1}^n  X_t^2 \le  f \left(  \theta +  \frac{ \sigma_H}{f'(\theta) \sqrt n } z  \right)  \right\}  -   P\{ Z\le z \}  \\
& =   P\left\{ \frac 1n  \sum_{t=1}^n  X_t^2  - f(\theta)
\le  f \left(  \theta +  \frac{ \sigma_H}{f'(\theta) \sqrt n } z  \right)  - f(\theta)  \right\}  -   P\{ Z\le z \}   .
\end{align*}
We take the short-hand notation
\begin{equation}\label{uz expression}
  u(z) = \frac{ \sqrt n } {\sigma_H}  \left[ f \left(  \theta +  \frac{ \sigma_H}{f'(\theta) \sqrt n } z  \right)  - f(\theta) \right]
   \end{equation}  and
 \begin{equation*}
  w = \frac{2\zeta}{\sigma_H \sqrt n}\sum_{t=1}^{n} \theta^t Y_t + \frac{\zeta^2}{\sigma_H \sqrt n}\sum_{t=1}^{n} \theta^{2t}.
 \end{equation*}
By \eqref{solution} the relationship between $X_t$ and $Y_t$ and \eqref{vn def} the formula of $V_n$,  we have
    \begin{align*}
  |A| &= \left| P\left\{ \frac {V_n} {\sigma_H} + w \le u \right\} - P\{ Z\le z \} \right|  \\
      & \le \left| P\left\{ \frac {V_n} {\sigma_H} + w \le u \right\} - \Phi(u) \right| + \left| \Phi(u) - P\{ Z\le z \} \right|.
          \end{align*} 
        The second term is bounded by $ C n^{-\frac 12 }$ from Lemma \ref{A3} below. Hence, we need only show that
\begin{equation}  \label{zhongjian bound}
 \sup_{u\in \R} \left| P\left\{ \frac {V_n} {\sigma_H} + w \le u \right\} - \Phi(u) \right| \le C  \varphi (n).
 \end{equation}
In fact,  Lemma~\ref{lem 4.1} implies that  for any $\gamma >0$,
         \begin{align}\label{b-e bound}
          \sup_{u\in \R} \left| P\left\{ \frac {V_n} {\sigma_H} + w \le u \right\} - \Phi(u) \right|
  & \le  \sup_{u\in \R} \left| P\left\{ \frac {V_n} {\sigma_H}  \le u \right\} - \Phi(u) \right|
  +   P\left\{ |w|  > n^{- \gamma} \right\}    +   \frac{ n^{- \gamma} } { \sqrt{2 \pi} } .
         \end{align}
    By Proposition~\ref{prop Fourth Moment B-E}, the Fourth moment Berry-Ess\'{e}en bound, we have that
    $$ \sup_{u\in \R} \left| P\left\{ \frac {V_n} {\sigma_H}  \le u \right\} - \Phi(u) \right| \le C  \varphi_1 (n) .$$
Chebyshev's inequality implies that
        \begin{align*}
 P\left\{ |w|  > n^{-\gamma} \right\}
  \le n^{p \gamma}  \E|w|^p. 
        \end{align*}
The triangular inequality and the inequality \eqref{pnorm}  imply that
\begin{align*}
\norm{w}_p\le \frac{2}{\sigma_H \sqrt n}\norm{\zeta\sum_{t=1}^{n} \theta^t Y_t}_{p} + \frac{1}{\sigma_H \sqrt n}\norm{\zeta^2\sum_{t=1}^{n} \theta^{2t}}_p\le \frac{C}{\sqrt n},
\end{align*} from which we have that
\begin{align*}
P\left\{ |w|  > n^{-\gamma} \right\}\le C n^{ p(\gamma-\frac12)}.
\end{align*} We take $\gamma < \frac 12$, $p$ large enough, and substitute the above inequalities into \eqref{b-e bound} to get \eqref{zhongjian bound}.
{\hfill\large{$\Box$}}

\begin{lem}\label{A3}
Let $u(z) $ be given by \eqref{uz expression} and  the interval $\mathcal{D}$ given by \eqref{Domain D}.
Then there exists some positive number $C_{\theta, H}$ independent of $n$ such that
\begin{equation*}
\sup_{z\in \mathcal{D}} | \Phi(u(z)) - \Phi(z) |  \le  \frac {C_{\theta, H}} {\sqrt n}.
\end{equation*}
\end{lem}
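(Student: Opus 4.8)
The plan is to estimate the difference $\Phi(u(z))-\Phi(z)=\int_{z\wedge u(z)}^{z\vee u(z)}\phi(t)\,dt$, where $\phi$ denotes the standard normal density, by first expanding $u(z)$ to second order and then splitting $\mathcal{D}$ according to the sign and the size of $z$. Writing $h=h(z)=\frac{\sigma_H}{f'(\theta)\sqrt n}z$, the definition \eqref{Domain D} of $\mathcal{D}$ guarantees $\theta+h\in(0,1)$, so a second-order Taylor expansion of the $C^2$ function $f$ around $\theta$ yields
\begin{equation*}
u(z)=z+\frac{\sigma_H}{2[f'(\theta)]^2\sqrt n}\,f''(\xi_z)\,z^2,
\end{equation*}
where $\xi_z$ lies between $\theta$ and $\theta+h$. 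Since $\rho(k)\ge 0$ under Hypothesis~\ref{assume}, one checks that $f''(\theta)=\sum_{i,j}(i+j)(i+j-1)\theta^{i+j-2}\rho(i-j)>0$, so $f$ is strictly convex; hence $u(z)\ge z$ throughout $\mathcal{D}$, with $u(0)=0$ and $u$ increasing. Two features drive the argument: on the branch $z\le 0$ one has $\xi_z\in(0,\theta]$, a compact set on which $f'$ and $f''$ are bounded and $f'$ is bounded below by a positive constant; on the branch $z>0$, by contrast, $\xi_z$ may approach $1$, where $f''(\xi)$ (indeed $f$ itself) blows up.

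Next I would bound each branch. For $z\le 0$ one has $z<u(z)<0$, so, the maximum of $\phi$ on $[z,u(z)]$ being attained at $u(z)$,
\begin{equation*}
|\Phi(u(z))-\Phi(z)|\le (u(z)-z)\,\phi(u(z))\le \frac{C}{\sqrt n}\,z^2\,\phi(c_2 z),
\end{equation*}
where I used the Taylor bound $u(z)-z\le \frac{C}{\sqrt n}z^2$ (valid since $f''$ is bounded on $(0,\theta]$) together with the bi-Lipschitz estimate $|u(z)|\ge c_2|z|$ coming from $f(\theta)-f(\theta+h)\ge (\min_{[0,\theta]}f')\,|h|$; the elementary fact $\sup_{t}t^2\phi(t)<\infty$ then gives the $1/\sqrt n$ bound. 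For $0<z\le z_0$ with $z_0:=\frac{f'(\theta)(1-\theta)}{2\sigma_H}\sqrt n$, the point $\xi_z$ stays below $\theta+\tfrac12(1-\theta)<1$, so $f''$ is again bounded; since $u(z)\ge z>0$ and $\phi$ decreases on $(0,\infty)$,
\begin{equation*}
|\Phi(u(z))-\Phi(z)|\le (u(z)-z)\,\phi(z)\le \frac{C}{\sqrt n}\,z^2\phi(z)\le\frac{C}{\sqrt n}.
\end{equation*}
Finally, for $z_0<z$ in $\mathcal{D}$ I would discard the derivative estimate and use only monotonicity of $\Phi$ and the crude Gaussian tail bound: since $u(z)\ge z$,
\begin{equation*}
|\Phi(u(z))-\Phi(z)|\le 1-\Phi(z)\le 1-\Phi(z_0)\le e^{-z_0^2/2},
\end{equation*}
and as $z_0$ is of order $\sqrt n$ this is $\le e^{-cn}\le C/\sqrt n$ for $n$ large. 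Collecting the three estimates yields the claim with a constant $C_{\theta,H}$ depending only on $\theta$ and $H$.

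The hard part will be the positive branch near the right endpoint of $\mathcal{D}$, where $\theta+h\to 1$ and both $f(\theta+h)$ and $f''(\xi_z)$ diverge, so that the naive bound $(u(z)-z)\,\phi(z)$ — with $u(z)-z$ of order $f''(\xi_z)\,z^2/\sqrt n$ — becomes useless. The resolution is to observe that such $z$ are necessarily of order $\sqrt n$, so $\Phi(-z)$ is super-exponentially small and the trivial tail bound dominates any polynomial blow-up of $f''$; thus no quantitative rate for the divergence of $f''(\xi)$ as $\xi\to 1$ is ever needed. The only delicate bookkeeping is the choice of the threshold $z_0\asymp\sqrt n$ separating the regime where $f''$ is controlled from the regime where the Gaussian tail is invoked, and checking that the constants in all three estimates depend only on $\theta$ and $H$ (through $f'$, $f''$, $\sigma_H$ and $\min_{[0,\theta]}f'$) and not on $n$.
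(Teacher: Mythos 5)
Your proof is correct and follows essentially the same route as the paper's: use convexity of $f$ to get $u(z)\ge z$, expand $u(z)-z$ via the mean value theorem so that $f''$ enters, exploit boundedness of $f''$ on a compact subinterval of $(0,1)$ away from the right endpoint, and fall back on the Gaussian tail for $z\gtrsim\sqrt n$. The only differences are cosmetic — you use a single second-order Taylor expansion where the paper applies the mean value theorem twice, and your bound $\sup_t t^2\phi(t)<\infty$ lets you treat the whole negative branch in one case where the paper splits it in two.
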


\begin{proof} We follow the line of the proof of Theorem 3.2 in  \cite{Sott}. %
 By the mean value theorem, there exists some number $\eta \in ( \theta , \theta +  \frac{ \sigma_H}{ f'(\theta) \sqrt n }z )$ when $z>0$ or $\eta \in ( \theta +  \frac{ \sigma_H}{ f'(\theta) \sqrt n }z,\,\theta  )$ when $z<0$ such that
    \begin{align*}
  u(z) = \frac{ \sqrt n } {\sigma_H}  \left[ f \left(  \theta +  \frac{ \sigma_H}{f'(\theta) \sqrt n } z  \right)  - f(\theta) \right]
 = \frac{f'(\eta)} {f'(\theta)} z.
   \end{align*} Since $f(\theta)$ is convex, we have  for any $z$,  $ u(z)\ge z $. Hence,
\begin{equation*}
   | \Phi(u)- \Phi(z) | 
   = \frac{1}{\sqrt{2\pi}}\int_{z}^{\frac{f'(\eta)} {f'(\theta)} z} e^{-\frac {t^2} {2}} \,\mathrm d t  .
\end{equation*}
Since the function $$ f(x,z) = z^2 e^{-\frac{x^2 z^2}{2}} \vert x-1 \vert $$ is uniformly bounded, we have when $z<0$,
\begin{align*}
 \frac{1}{\sqrt{2\pi}}\int_{z}^{\frac{f'(\eta)} {f'(\theta)} z} e^{-\frac {t^2} {2}} \, \mathrm d t
     \le \frac{1}{\sqrt{2\pi}} \abs{ z}  e^{-  \frac{ z^2}{2} \big( \frac{f'(\eta)} { f'(\theta) } \big)^2}\abs{\frac{f'(\eta)}{f'(\theta)}-1}
\le \frac{C_{\theta, H} }{|z|} .
\end{align*} Thus, 
we obtain that
\begin{align*}
 \sup_{- \frac { f'(\theta)\sqrt n}{\sigma_H} \theta <z\le - \frac { f'(\theta)\sqrt n}{2\sigma_H} \theta} | \Phi(u) - \Phi(z) |  \le  \frac {C_{\theta, H} } {\sqrt n}.
\end{align*}
When $  - \frac { f'(\theta)\sqrt n}{2\sigma_H} \theta <z <\frac { f'(\theta)\sqrt n}{2\sigma_H} \Big(\theta \wedge (1-\theta)\Big)$,
using the mean value theorem and making the change of variable $t = z^2 s$,
together with the fact that $f_2(s, z) = z^2 e^{-\frac{s^2 z^4}{2}}$ is also uniformly bounded,
we conclude that there exists a number $$\eta _1 \in (\theta , \eta)\subset [\theta,\, \theta+\frac12  \Big(\theta \wedge (1-\theta)\Big)] $$ such that
\begin{align*}
  \int_{z}^{\frac{f'(\eta)} {f'(\theta)} z} e^{-\frac {t^2} {2}} \, \mathrm d t &
= \int_{\frac{1}{z}}^{\frac{f'(\eta)} {f'(\theta)} \frac{1}{z}} z^2 e^{-\frac {s^2 z^4} {2}} \, \mathrm d s\\
&\le C_{\theta, H}  \frac{1}{\vert z \vert} {|f'(\eta)-f'(\theta)|}   \\
&\le C_{\theta, H}   \frac{1}{\vert z \vert} \abs{f''(\eta _1)} \frac{  \vert z \vert}{  \sqrt n}
\\ &\le \frac{C_{\theta, H} }{ \sqrt n} ,
\end{align*} since the second derivative function $f''$  is bound on the close interval $ [\theta,\, \theta+\frac12  \Big(\theta \wedge (1-\theta)\Big)] $ .
When $ z\ge \frac { f'(\theta)\sqrt n}{2\sigma_H} \Big(\theta \wedge (1-\theta)\Big)$,
   \begin{equation*}
\frac{1}{\sqrt{2\pi}}   \int_{z}^{\frac{f'(\eta)} {f'(\theta)} z}  e^{-\frac {t^2} {2}} \,   \mathrm d t
\le  \frac{1}{\sqrt{2\pi}}   \int_{z}^{\infty}   e^{-\frac {t^2} {2}} \,   \mathrm d t \le  \frac{C_{\theta, H} }{\sqrt n}.
       \end{equation*}
\end{proof}

\vskip 0.2cm {\small {\bf  Acknowledgements}:
This research is partly supported by NSFC (No.11961033).
}


\end{document}